\documentclass{amsart}
\usepackage{amsmath,amssymb}
\usepackage{graphicx,subfigure}

\usepackage{color}


\newtheorem{theorem}{Theorem}[section]
\newtheorem{proposition}[theorem]{Proposition}
\newtheorem{corollary}[theorem]{Corollary}

\newtheorem{lemma}[theorem]{Lemma}

\theoremstyle{definition}
\newtheorem{definition}[theorem]{Definition}

\newtheorem*{definition*}{Definition}

\theoremstyle{remark}
\newtheorem{remark}[theorem]{Remark}

\numberwithin{equation}{section}


\newcommand{\ep}{\varepsilon}

\newcommand{\la}{\lambda}

%

\newcommand{\Si}{\Sigma}

%

%

%

%

%

%

\def\RR{\mathbb{R}}

\renewcommand\SS{\mathbb{S}}
%


%

%

%

\newcommand\minus\backslash
\newcommand{\id}{{\rm id}}

\newcommand\lan\langle
\newcommand\ran\rangle

%


\DeclareMathOperator\Vol{Vol}

\DeclareMathOperator\dist{dist}

\renewcommand\leq\leqslant
\renewcommand\geq\geqslant
%
\newlength{\intwidth}

%

%
\addtolength{\parskip}{3pt}

%


\DeclareMathOperator\Exp{Exp}

\DeclareMathOperator\E{\mathbb{E}}
\DeclareMathOperator\Cov{Cov}
\begin{document}
	
	\title[]{A relation between two different formulations of the Berry's conjecture}
	
	\author{Alba García-Ruiz}
	\address{Instituto de Ciencias Matem\'aticas, Consejo Superior de Investigaciones Cient\'\i ficas, 28049 Madrid, Spain}
	\email{alba.garcia@icmat.es}

	%
	\keywords{Random Wave Conjecture, Berry Field, Inverse Localization, Random Field, Benjamini-Schramm convergence, Gaussian Field, Eigenfunction, Ergodicity.}
	\begin{abstract}
	The Random Wave Conjecture of M. V. Berry is the heuristic that eigenfunctions of a classically chaotic system should behave like Gaussian random fields, in the large eigenvalue limit. In this work we collect some definitions and properties of Gaussian random fields, and show that the formulation of the Berry's conjecture proposed using local weak limits is equivalent to the one that is based on the Benjamini-Schramm convergence. Finally, we see that both these formulations of the Berry's property imply another property known as inverse localization that relates high energy eigenfunctions and solutions to the Euclidean Helmholtz equation.
	\end{abstract}
	\maketitle
	\section{Introduction}
	In his influential papers \cite{Berry1983,Berry1977RegularAI} Berry gave a heuristic description of the behavior of high-energy wave-functions of quantum chaotic systems. He suggested that high-frequency eigenfunctions of the Laplacian in geometries where the geodesic flow is sufficiently chaotic should, in some sense, at the wavelength scale, behave like an isotropic Gaussian field $\Psi_{Berry}$ with covariance function
	\begin{equation}
		\E\left[\Psi_{Berry}(x)\Psi_{Berry}(y)\right]=\int_{\mathbb{S}^{d-1}}e^{2\pi i(x-y)\theta}d\omega_{d-1}(\theta)=c_d\frac{J_\Lambda(|x-y|)}{|x-y|^\Lambda},
	\end{equation}
	where $d\omega_{d-1}$ is the uniform measure in the $(d-1)$-sphere, $J_\Lambda$ is the $\Lambda$-th Bessel function of the first kind with $\Lambda=\frac{d-2}{2}$ and $c_d>0$ is a constant such that we have $\mathbb{E}\left[|\Psi(x)|^2\right]=1$. 
	
	 This ambiguous comparison between	a deterministic system and a stochastic field is known as the Random Wave Model (RWM). The RWM was first introduced in the study of chaotic quantum billiards on flat domains and a weaker version of it was proved in the context of random regular graphs \cite{graph}. The RWM has led to many conjectures concerning $L^p$ norms, semi-classical measures or volume and topology of nodal domains of chaotic eigenfunctions. Several of these conjectures have been addressed numerically (\cite{Hejhal1992}, \cite{AURICH1993185}, \cite{PhysRevE.57.5425}, \cite{Barnett2005AsymptoticRO}) or experimentally (\cite{PhysRevE.70.056209}, \cite{PhysRevE.72.066212}, \cite{HohStock}). 
	 
	 However, there is no agreement on how Berry’s conjecture should be formulated rigorously because the idea of a sequence of deterministic objects having a random limit can be interpreted in different ways. The reader can for instance refer to \cite{RudSar}, \cite{zelditch2010recent}, \cite{Nonnenmacher2013} and \cite{Humphries2017EquidistributionIS} for different mathematical perspectives on	Berry’s conjecture. Some of these conjectures focus only on the values of the eigenfunctions, viewing $(M,d\Vol_M)$ as a probability space and each eigenfunction as a random variable. These formulations do not provide any insight into the properties of the nodal set, number of nodal domains and other related questions. Some other formulations, as the one in \cite{Bourgain}, have led to groundbreaking results concerning the nodal sets of monochromatic waves satisfying the RWM (see, for instance, \cite{ROMANIEGA20221}). 
	
	In \cite{Ingremeau2021}, \cite{Ingremeau2022} and \cite{Abert2018} one can find two different formulations of the Random Wave Conjecture that takes into account both the shape of the eigenfunction and its distribution of values. Although both formulations are based on different notions (the one proposed in \cite{Ingremeau2021} is related to what the author calls \emph{local weak limits}, defined using a covering by charts of the manifold, and the one from \cite{Abert2018} makes heavy use of the Benjamini-Schramm convergence), the idea behind is similar: for a random $x\in M$, the eigenfunction $\psi_n\left(\Exp_x(\cdot/\sqrt{\lambda_n})\right)$ should behave like a random monochromatic wave as long as the geodesic flow on $(M,g)$ is chaotic. In this work we show that both formulations of the Berry conjecture are equivalent in the context of a compact manifold $M$.  As a consequence of the equivalence in compact manifolds, we are able to ensure that the local weak limit formulation does not depend on the choice of charts. Note that, doing the necessary identifications, in the graph theory literature it can be seen that Benjamini-Schramm limit and local weak limit are the same notions.
	
	In the final section, we will introduce a related notion, the one of the inverse localization property, and show that, with any of these formulations, the Berry property implies the inverse localization. Roughly speaking, we say that a compact manifold $M$ satisfies the inverse localization property if we can approximate an arbitrary solution of the Helmholtz equation on $\mathbb{R}^d$ using high energy eigenfunctions of the manifold. For the time being, all the known examples of manifolds satisfying this property do have a high multiplicity, that seems to be key to construct the approximating eigenfunction. However, the version of the inverse localization that we get from the Berry property is in some sense stronger: there exists a sequence of eigenfunctions associated to different eigenvalues such that any solution to Helmholtz equation can be approximated by one of these. Notice that the sequence does not depend on the solution to Helmholtz equation chosen and that for any eigenvalue we are considering just one eigenfunction associated to it, so the degeneracy of eigenvalues plays no role here. 
	
		\subsection*{Organization of the paper}
	
	In Section 2 the properties asked to $M$ are specified and some needed definitions are given. In Section 3, local weak limit and BS convergence are defined and the two different formulations of the Berry property are stated. The main theorem of the paper is as follows. 
	\begin{theorem}
		If $M$ is a compact, connected Riemannian manifold covered by a finite family $\left\{U_m\right\}_{m=1}^{m_{\max}}$ of open subsets with some extra conditions (specified at the beginning of Section 2), then Berry's property in the local weak limit form (given in definition \ref{C1}) is equivalent to Berry's property in BS form (as defined in \ref{C2}).
	\end{theorem}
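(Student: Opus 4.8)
The plan is to establish the equivalence by showing that both notions of convergence — local weak limit in the sense of \cite{Ingremeau2021} and Benjamini-Schramm convergence in the sense of \cite{Abert2018} — are, for a compact manifold, two descriptions of the convergence of the same sequence of probability measures on a common space of pointed pairs (manifold-germ, function-germ). Concretely, to any eigenfunction $\psi_n$ with eigenvalue $\lambda_n$ and any point $x\in M$ one associates the rescaled object $\psi_n(\Exp_x(\cdot/\sqrt{\lambda_n}))$, which lives on a ball in $T_xM\cong\RR^d$; pushing forward the normalized volume measure $d\Vol_M/\Vol(M)$ under $x\mapsto\psi_n(\Exp_x(\cdot/\sqrt{\lambda_n}))$ yields a probability measure $\mu_n$ on an appropriate space of functions on $\RR^d$ (equipped with, say, the topology of local $C^0$ or $C^k$ convergence on compact sets). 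Berry's property in either form should then be restated as: $\mu_n$ converges weakly to the law of the Berry field $\Psi_{Berry}$ (or, in the BS formulation, to the law of a pointed random monochromatic wave, which by the isotropy and stationarity of the Berry field carries the same information). The first step is therefore to write down both formulations carefully in this unified language and identify precisely what extra data the BS formulation records (the pointed-metric germ) that the chart-based local weak limit does not, and to check that on a fixed compact $(M,g)$ this extra data is determined by $x$ and $\lambda_n\to\infty$ alone, so that it carries no additional information in the limit.

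Granting that reduction, the proof splits into two implications. For ``local weak limit $\Rightarrow$ BS'', I would take the defining chart cover $\{U_m\}$, use a subordinate partition of unity, and show that convergence of the chart-localized empirical measures forces convergence of the global pushforward measure $\mu_n$; then, since BS convergence on a compact manifold is exactly weak convergence of $\mu_n$ together with the (automatic) convergence of the metric germs $\Exp_x^*g$ at scale $1/\sqrt{\lambda_n}$ to the flat metric, the BS property follows. For the converse ``BS $\Rightarrow$ local weak limit'', one restricts the global empirical measure to each chart $U_m$ — i.e., conditions on the event $\{x\in U_m\}$ — and checks that the resulting conditional laws are precisely the chart-localized empirical measures appearing in \cite{Ingremeau2021}; weak convergence of the unconditioned measure plus continuity of the conditioning (using that the $U_m$ have boundary of measure zero, one of the ``extra conditions'' of Section 2) gives convergence of each piece. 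Along the way one obtains, as the stated corollary, independence of the local weak limit from the choice of charts, since the limit is always the pushforward $\mu_\infty$, intrinsically defined.

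The main obstacle I expect is topological bookkeeping around the precise function-space in which the rescaled eigenfunctions are compared: the two papers may use different regularities (pointwise values on a fixed ball, versus germs with $C^\infty_{loc}$ topology, versus distributional convergence), and one must verify that the resulting weak topologies on probability measures coincide on the relevant family, which requires a tightness/compactness argument. Here elliptic regularity and the eigenvalue equation give uniform local bounds on $\psi_n(\Exp_x(\cdot/\sqrt{\lambda_n}))$ in $C^k$ for every $k$ (after the standard $L^2$-normalization), so the family $\{\mu_n\}$ is tight in $C^\infty_{loc}$ and the subtleties of the topology are harmless; nevertheless, making this compatible with the chart-based definition — where the comparison is done in fixed coordinates rather than in geodesic normal coordinates — requires controlling the distortion between a generic chart map and the exponential map at scale $1/\sqrt{\lambda_n}$, and showing this distortion vanishes in the limit. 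A secondary technical point is that the BS formulation a priori allows the limiting pointed space to be a random element of a space of manifolds-with-function, so one must argue that compactness and smoothness of the fixed $M$ collapse this to a deterministic flat $\RR^d$ carrying the random Berry field; this is where connectedness of $M$ and the uniformity of the geometry over the compact set $M$ are used.
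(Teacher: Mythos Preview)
Your forward direction is essentially the paper's: split over the charts $U_m$, reduce (via Stone--Weierstrass on $\mathcal{E}^d$) to products of a metric-germ functional and a function-germ functional, and note that the metric piece converges deterministically to flat $[\RR^d,0]$ while the function piece is handled chartwise by hypothesis.

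The reverse direction has a genuine gap. You propose to condition the global empirical measure on $\{x\in U_m\}$ and use ``continuity of conditioning'' to recover convergence on each chart. But the event $\{x\in U_m\}$ lives on $M$, not on the target space $C^\infty(\RR^d)$; after the pushforward there is no base-point variable left to condition on. What you actually have is the decomposition $\mu_n=\sum_m \tfrac{\Vol(U_m)}{\Vol(M)}\mu_m^n$ with $\mu_n\to\mu_{Berry}$, and weak convergence of a convex combination does not force the individual summands to converge to the common limit --- nothing in your argument rules out distinct subsequential limits $\nu_m$ whose weighted average happens to equal $\mu_{Berry}$. The measure-zero-boundary condition on the $U_m$ is irrelevant to this obstruction. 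The paper closes the gap with an ingredient your proposal never mentions: \emph{ergodicity}. One extracts subsequential limits $\mu_m$ via Prokhorov, shows each is translation-invariant (shifting the base point by $y/\sqrt{\lambda_n}$ is asymptotically the same as translating the rescaled function by $y$), and then invokes the Fomin--Grenander--Maruyama theorem to see that $\mu_{Berry}$ is ergodic for the $\RR^d$-translation action, hence extremal among translation-invariant probability measures; extremality is exactly what forces every term of the convex combination to equal $\mu_{Berry}$. There is a second subtlety you overlook entirely: the BS side lives on $\mathcal{E}^d$, which is a quotient by pointed isometries, so the BS hypothesis only controls functionals invariant under the $O(d)$-action at the origin, whereas the chart formulation fixes a frame $(V_1^m,\ldots,V_d^m)$ and tests arbitrary functionals. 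The paper bridges this by averaging each $\mu_m$ over $O(d)$ with respect to Haar measure, applying the ergodicity argument to the averaged measures $\widetilde{\mu_m}$, and then undoing the average via an integral-decomposition lemma together with the isotropy of $\mu_{Berry}$ itself.
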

	Sections 4 and 5 are devoted to prove both implications of this theorem. Finally, in Section 6, the inverse localization property is introduced and it is proved that the Berry property implies the former. To conclude, some comments are made around this idea.
	\section{Starting definitions and notation. Gaussian fields}
	In all what follows, we consider a compact connected Riemannian manifold of dimension $d$ without boundary, $(M,g)$. $dx$ is the volume measure on $M$ and we will denote by $\Delta$ the Laplace-Beltrami operator on the manifold. An easy application of the classical spectral theorem for compact manifolds ensures that there exists $(\psi_n)_n$ an orthonormal basis of $L^2(M)$ that consists of functions on $M$ such that $\Delta\psi_n+\lambda_n\psi_n=0$ and $\left\|\psi_n\right\|^2_2=\Vol(M)$. Assume that the eigenvalues are ordered in non-decreasing order. We may furthermore suppose that we have a finite family $\{U_m\}_{m=1}^{m_{\max}}$ of open subsets with the following properties: \begin{itemize}\item$M\subset\bigcup_{m=1}^{m_{\max}}\overline{U_m}$, \item for any $m_1\neq m_2$ we have $U_{m_1}\cap U_{m_2}=\emptyset$ and \item for any $m$, there exists a family of vector fields on $U_m$, say $(V_1^m,\ldots,V_d^m)$, forming an orthonormal frame of the tangent bundle $TU_m$.\end{itemize} For each $x\in M$, we will denote by $\exp_x:T_xM\rightarrow M$ the exponential map at $x$ induced by the metric g on $M$. Moreover, given $x$ and $y$ in $M$, we will denote by $\dist(x,y)$ the Riemannian distance between $x$ and $y$. 
	
	Unless otherwise stated, the spaces  $C^\infty(M)$ and $C^\infty(\mathbb{R}^d)$ will be equipped with the topology of uniform convergence of derivatives on compact sets. We will also define in $C^\infty(\mathbb{R}^d)$ a distance $d$ given by the Fréchet structure of $C^\infty(\mathbb{R}^d)$. This means that we first define the class of semi-norms \begin{equation}\|f\|_{k,n}=\sup\left\{\left|f^{(k)}(x)\right|: x\in B(0,n)\right\}\end{equation} and then the distance given by \begin{equation}\label{dist}
		d(f,g)=\sum_{n=0}^\infty\sum_{k=0}^\infty\frac{2^{-k-n}\|f-g\|_{k,n}}{1+\|f-g\|_{k,n}}.
	\end{equation}With this distance, we recover the smooth topology in $C^\infty(\mathbb{R}^d)$ and we can use it to define balls $B(f,\epsilon)$. Notice that $C^\infty(\mathbb{R}^d)$ with this distance is a locally compact metric space. Moreover, when we speak of probability measures on these spaces, we will assume that they are equipped with the Borel $\sigma$-algebra.

	Our next goal is to introduce the field $\Psi_{Berry}. $ It is a stationary Gaussian field on $\mathbb{R}^d$ whose spectral measure is the uniform measure on the unit sphere $\mathbb{S}^d$. We first recall some definitions: \begin{definition}[Smooth random field]
A smooth random field (on $\mathbb{R}^d$) is a map $X$ from a probability space $(\Omega,\mathcal{B},P)$ to $C^\infty(\mathbb{R}^d)$ that is measurable, where $C^\infty(\mathbb{R}^d)$ is considered with the topology of convergence of all derivatives over all compact sets. Notice that for any $n\in\mathbb{N}$ and any $t_1,\ldots,t_n\in\mathbb{R}^d$, the vector $(X(t_1),X(t_2),\ldots,X(t_n))$ is a random vector.

	\end{definition}
	\begin{definition}[Gaussian random field]
	A Gaussian random field is a (smooth) random field where all the finite dimensional distributions $F_{t_1,\ldots,t_k}(x_1,\ldots,x_k)=P(X(t_1)\leq x_1,\ldots,X(t_k)\leq x_k)$ are multivariate normal distributions for any choice of $k$ and $\left\{t_1,\ldots,t_k\right\}$. Since multivariate normal distributions are completely specified by expectations and covariances, to determine a Gaussian random field $X_t$ it suffices to specify $m(t):=\E\{X_t\}$ and $C(t,s):=\Cov\{X_t,X_s\}$ in an appropriate way. If $m\equiv0$ we say that the Gaussian field is centered. 
	\end{definition}\begin{definition}[Stationary and isotropic random field]
We say that a Gaussian random field is stationary if $C(t,s)$ depends only on $t-s$ and $m(t)=m$ is constant. An isotropic Gaussian random field is a stationary Gaussian random field whose covariance function depends on the distance alone, i.e. $C(t,s)=C(\tau)$ where $\tau=\dist(t,s)$.
\end{definition}
\begin{definition}[Law and equivalence of random fields]
For a random field $X$, we define its law as the probability measure $\mu_X:\mathcal{B}(C^\infty(\mathbb{R}^d))\rightarrow\mathbb{R}$ given by $\mu_{X}=PX^{-1}:\mathcal{B}(\mathbb{R})\rightarrow\mathbb{R}$, such that for any Borel set $A\in\mathcal{B}(C^\infty(\mathbb{R}^d))$, $\mu_{X}(A)=P(X^{-1}(A))$. Here $P$ is the probability we have in the probability space $(\Omega,\mathcal{B},P)$ where $X$ is defined.

	We say that two random fields $X^1$ and $X^2$ are equivalent if they have the same law, i.e. $\mu_{X^1}=\mu_{X^2}$. In the sequel we will always identify fields which are equivalent and we will speak indifferently of the field and its law. 
\end{definition}

	As explained for instance in Section $A.11$ of \cite{NZ2016} there is a bijection between smooth centered Gaussian fields on $\mathbb{R}^d$ and positive definite functions defined on $\mathbb{R}^d\times\mathbb{R}^d$. Also, recall that, by Bochner's theorem \cite[Section 2.1.11]{cohen2013fractional}, for any finite Borel complex measure $\mu$ on $\mathbb{R}^d$, its Fourier transform $\hat{\mu}$ can be used to define a continuous positive definite function $K(x,y)=\hat{\mu}(x-y)$. If, in addition, $\mu$ is compactly supported, its Fourier transform is of class $C^\infty$, and gives rise to a unique smooth Gaussian field $X$ on $\mathbb{R}^d$ (up to law equivalence). In this case, we call $\mu$ the spectral measure of $X$.

\begin{definition}[$\Psi_{Berry}$, the random monochromatic wave]
		
We call random isotropic monochromatic wave, and denote by $\Psi_{Berry}$, the unique stationary Gaussian field on $\mathbb{R}^d$ whose spectral measure is the uniform measure on the unit sphere $\mathbb{S}^d$. This field $\Psi_{Berry}:\mathbb{R}^d\rightarrow\mathbb{R}$ is uniquely defined as the centered stationary Gaussian random field, with covariance function \begin{equation}
	\E\left[\Psi_{Berry}(x)\Psi_{Berry}(y)\right]=\int_{\mathbb{S}^{d-1}}e^{2\pi i(x-y)\theta}d\omega_{d-1}(\theta),
\end{equation}where $d\omega_{d-1}$ is the uniform measure on $\mathbb{S}^{d-1}$.

Let us consider the space $FP:=\left\{f\in C^\infty(\mathbb{R}^d), s.t. -\Delta f=f\right\}$. Then it is easy to check that $\Psi_{Berry}$ is almost surely an element of $FP$. This way, if $A\subset FP$ is a Borel set, $P\left(\Psi_{Berry}\in A\right)$ is well-defined and \begin {equation}\begin{aligned}\mu_{Berry}:\ &\mathcal{B}(FP)\rightarrow\mathbb{R}^+_0\\&A\mapsto P(\Psi_{Berry}\in A),\end{aligned}\end{equation} is a Borel measure on $FP\subset C^\infty(\mathbb{R}^d)$. In other words, the probability measure $\mu_{Berry}$ defined on $C^\infty(\mathbb{R}^d)$ is supported on $FP$. In what follows we will work with this measure $\mu_{Berry}$ and with $\Psi_{Berry}$ indistinguishably.
\end{definition}
	\section{Statement of the conjectures}
	We first consider the formulation of the conjecture proposed in \cite{Ingremeau2022}: 
	
	Recall that we have a finite family $\{U_m\}_{m=1}^{m_{\max}}$ of open subsets on which there exists a family of vector fields $(V_1^m,\ldots,V_d^m)$ forming an orthonormal frame of the tangent bundle. For a 
	given $x\in U_m$, we define the function $\Exp_x:\mathbb{R}^d\rightarrow\mathbb{R}$ as $\Exp_x(y):=\exp_x\left(\sum_{j=1}^{d}y_jV^m_j(x)\right)$. Let $p$ be a random point in $U_m$ chosen uniformly with respect to the volume measure $dx$. For each $n$, let $\phi_p^n\in C^\infty\left(\mathbb{R}^d\right)$ be the random field defined by $\phi_p^n(y):=\psi_n\left(\Exp_p(y/\sqrt{\lambda_n})\right)$. It is a random element of $C^\infty(\mathbb{R}^d)$. Notice that the definition of $\phi_p^n$ depends on the chart $U_m$ chosen. \begin{definition}[Convergence in law]
		Let $\mu_{Berry}$ be the probability measure on $C^\infty(\mathbb{R}^d)$ associated to $\Psi_{Berry}$. We say that $\phi_p^n(y)$ converges in law as a random field towards $\Psi_{Berry}$ in the frame $U$ if, for any continuous, bounded functional $F:C^\infty(\mathbb{R}^d)\rightarrow\mathbb{R}$, we have \begin{equation}	\frac{1}{\Vol(U)}\int_UF\left(\phi_p^n\right) dp  \xrightarrow[n \to \infty]{} \E_{\mu_{Berry}}[F]	=\int_{C^\infty(\mathbb{R}^d)}Fd\mu_{Berry}.\end{equation}
	\end{definition}Then the conjecture as formulated in \cite{Ingremeau2022} can be stated as follows.
	
	\begin{definition}[Formulation of Berry's conjecture in the local weak limit form]\label{C1}
		We have the Berry property in the local weak limit form if for any $U_m$, as $\lambda_n\rightarrow\infty$ when $n\rightarrow\infty$, the family $\phi_p^n(y)$ converges in law as a random field towards the Gaussian field $\Psi_{Berry}$.
	\end{definition}
\begin{remark}
	The construction of local weak limits depends on the choice of the partition $\{U_m\}_{m=1}^{m_{max}}$ and frames $\{V^m_j\}_{j=1}^d$, $m=1,2,\ldots,m_{\max}$, and so definition \ref{C1} does depend \emph{a priori} on them. However, if it holds for one choice of sets $U_m$ and frames $V_m$, then it also holds for any other choice. This can be seen, for example, as a consequence of the equivalence between this formulation and the one using Benjamini-Schramm convergence that we prove here.
\end{remark}

On the other hand, we consider the formulation of the conjecture introduced in \cite{Abert2018}. For the $d$-dimensional Riemannian manifold $M=(M,g)$, let $M_n=(M,g_n)$ denote the rescaling of $M$ by the factor $\sqrt{\lambda_n}$, i.e. we change only the metric by multiplying every distance by $\sqrt{\lambda_n}$. A property of this $M_n$ is that if $\phi:M\rightarrow\mathbb{R}$ is an eigenfunction of the Laplacian on $M$ associated to an eigenvalue $\lambda$, then the very same function is also an eigenfunction on $M_n$ with eigenvalue $\lambda'=\lambda/\lambda_n$.

 Just like it is done in \cite{Abert2018}, we start by considering the space $\mathcal{M}^d$ of pointed, connected, complete Riemannian manifolds of dimension $d$ up to pointed isometries, with its smooth topology. The reader should see \cite[$\S$A.1]{BS} for a precise definition of this topology, however the philosophy is as follows: two pointed manifolds $(M, p)$ and $(N, q)$ are close if there exist two compact subsets of $M$ and $N$ containing large neighborhoods of the base points $p$ and $q$ respectively that are diffeomorphic via a map $\phi$, that is close in the $C^{\infty}$ metric to an isometry. 
 
 Another way of seeing this is the following: a sequence of pointed Riemannian manifolds, let us say $\left(M_n, p_n\right)$, converges in the $C^{\infty}$ metric towards $(M, p)$ if for every radius $R>0$, there exists a sequence of maps $f_n: B_M(p, R) \rightarrow M_n$ with $f_n(p)=p_n$ such that the Riemannian metric $f^* g_n$ on the metric ball $B_M(p, R)$ inside $M$, pulled back from the Riemannian metric $g_n$ on $M_n$, converges to the restriction of the Riemannian metric $g$ on $M$ in $C^{\infty}$-topology. It can be proved that the smooth topology on $\mathcal{M}^d$ is induced by a Polish topology, i.e. the space $\mathcal{M}^d$ is separable and completely metrizable; see \cite[$\S$A.2]{BS} for a proof of this result. The space $\mathcal{M}^d$ is not compact but Cheeger's compactness theorem can be used to show that the subspace consisting of pointed manifolds $(M, p)$ with uniformly bounded geometry is a compact subspace; see \cite[$\S$A.1]{BS} for the proof.

The next step is to construct a measure associated to $(M_n,p)$. Pushing forward the normalized Riemannian volume measure under the following map one obtains a probability measure $\mu_M$ on $\mathcal{M}^d$. 
\begin{equation}
M \rightarrow \mathcal{M}^d, p \mapsto(M, p)	
\end{equation}

We now recall a common notion of convergence for these measures and use it to state the definition of BS convergence.
\begin{definition}[Convergence in the weak* topology]
A sequence of probability measures $\left(\mu_n\right)$ on $\mathcal{M}^d$ converge to $\mu$ in the weak$^*$ topology if $\int F d \mu_n \rightarrow \int F d \mu$ for every bounded, continuous function $F: \mathcal{M}^d \rightarrow \mathbf{R}$. 
\end{definition}
Beware that some authors refer to the topology of the previous definition as weak topology.
 \begin{definition}[BS-convergence of manifolds]
	A sequence $\left(M_n\right)$ of compact connected complete Riemannian $d$-manifolds is convergent in the sense of Benjamini-Schramm, or just BS-converges, if the sequence $\mu_{M_n}$ converges in the weak* topology of the set of all probability measures on $\mathcal{M}^d$.
\end{definition}

 Following \cite{Abert2018}, let us now explain a particularity of the measures thus obtained, though we will not use it in the sequel. 

Let $T^1 \mathcal{M}^d$ be the space of isometry classes of unit tangent bundles $\left(T^1_p M, p, v\right)$, where $v \in T_p^1 M$. The geodesic flows on each $T^1_p M$ combine to give a continuous flow on $T^1 \mathcal{M}^d$, 
\begin{equation}\label{unimod}
	\mathbf{\sigma}_t: T^1 \mathcal{M}^d \rightarrow T^1 \mathcal{M}^d
\end{equation}
What is more, in each fiber $T_p^1 M$ of the following map it one can define a (Liouville) measure $\omega_{M, p}$ induced by the Riemannian metric on $M$.

\begin{equation}
	T^1 \mathcal{M}^d \rightarrow \mathcal{M}^d ;(M, p, v) \mapsto(M, p)
\end{equation}

 Any measure $\mu$ on $\mathcal{M}^d$ can then be lifted to a measure $\widetilde{\mu}$ on $T^1 \mathcal{M}^d$ defined by the equation $d \widetilde{\mu}=$ $\omega_{M, p} d \mu$. The Liouville measure on the unit tangent bundle of a Riemannian manifold is invariant under the geodesic flow and, similarly, the measure $\widetilde{\mu}_M$ is invariant under the flow $\mathbf{g}_t$.

 Using the same notation as in \cite{BS}, we say that a measure $\mu$ on $\mathcal{M}^d$ is unimodular if $\widetilde{\mu}$ is invariant under (\ref{unimod}). We also refer to \cite{BS} for other characterizations of unimodularity. Then we have just seen the following remark, that we will not use in the proofs.
\begin{remark}
	The weak$^*$ limit of $\mu_{M_n}$ is a unimodular probability measure on $\mathcal{M}^d$.
\end{remark}
In general, around a randomly chosen point, $M_n$ has no reason to be similar to a given manifold for large $n$. Instead, the limiting object is a unimodular probability measure on $\mathcal{M}^d$ that precisely encodes how the manifold, for large $n$, looks like near randomly chosen base points. This perspective is studied in great details in \cite{BS}. By work of Cheeger and Gromov (see, for example \cite[Chapter 10]{petersen2013riemannian}), the subset
of $\mathcal{M}^d$ consisting of pointed manifolds $(M, p)$ with bounded geometry is compact. Here, bounded geometry means that the sectional curvatures of $M$, and all of their
derivatives, are uniformly bounded, and the injectivity radius at
the base point $p$ is bounded away from zero. Compact manifolds are examples of manifolds with bounded geometry. By the Riesz representation
theorem and Alaoglu’s theorem, this implies that the set of unimodular probability measures supported on manifolds with bounded geometry is weak$^*$ compact, since
unimodularity is a weak$^*$ closed condition.

 One can similarly define the BS-convergence of a sequence $\left\{M_n\right\}_{n=1}^\infty$ of manifolds with associated functions $\phi_n:M_n\rightarrow\mathbb{R}$. We follow again \cite{Abert2018}. Consider the space 

\begin{equation}
\left.	\mathcal{E}^d=\left\{\begin{array}{l|l}
		(M, p, \phi) & \begin{array}{l}
			M \text { connected, complete } d \text {-manifold, } \\
			p \in M, \phi: M \rightarrow \mathbf{R} \text { smooth }
		\end{array}
	\end{array}\right\} \middle/\begin{aligned}
		& \text { pointed } \\
		& \text { isometries }
	\end{aligned}\right.
\end{equation}equipped with its smooth topology where $[M, p, \phi]$ is close to $[N, q, \psi]$ if there are compact subsets of $M$ and $N$ containing large radius neighborhoods of $p$ and $q$ respectively, that are diffeomorphic via a map $D$ that is $C^{\infty}$-close to an isometry and that also satisfies that $\phi$ and $\psi \circ D$ are $C^{\infty}$ close. As seen in \cite[Proposition 8]{Abert2018}, the topological space $\mathcal{E}^d$ has a compatible structure of a Polish space (i.e. a complete, separable metric space). 

We can now equip the topological space $\mathcal{E}^d$ with the $\sigma-$algebra $\mathcal{B}$, generated by its open sets, and define a probability measure on $\mathcal{E}^d$ as a $\sigma-$additive function $\mathcal{B}\rightarrow[0,1]$ that maps the whole $\mathcal{E}^d$ to $1$. Similarly to what we did before, we can recall the following notion of convergence: \begin{definition}[Convergence in the weak* topology]
A sequence of probability measures $\mu_n$ on $\mathcal{E}^d$ is said to converge in the weak$^*$ topology towards a probability measure $\mu$ if for each bounded, continuous real function $F$ on $\mathcal{E}^d$ we have \begin{equation}
	\lim _{n \rightarrow \infty} \mu_n(F)=\mu(F).
\end{equation}
\end{definition} 

For any $M$ and $\phi:M\rightarrow\mathbb{R}$, smooth map, we can consider the push forward of the normalized Riemannian volume measure under the map\begin{equation}
	M \rightarrow \mathcal{E}^d, p \mapsto[M, p, \phi]
\end{equation} and obtain a probability measure $\mu_{M,\phi}$ on $\mathcal{E}^d$. 
As we did with $\mathcal{M}^d$, we shall denote by $T^1 \mathcal{E}^d$ the space of isometry classes of tangent bundles with a function $\left(T^1 M, p, v, \phi\right)$ where $v \in T_p^1 M$. Here again it comes equipped with a continuous (geodesic) flow and any measure $\mu$ on $\mathcal{E}^d$ can be lifted to a measure $\widetilde{\mu}$ on $T^1 \mathcal{E}^d$ using the volume form on the fiber. However, the measure $\widetilde{\mu}_{M_n, \phi_n}$ is not invariant under the geodesic flow unless $\phi_n$ is constant on $M_n$.

\begin{definition}
	[BS-convergence of manifolds with functions]
	A sequence $(M_n,\phi_n)$ where each $M_n$ is a compact connected complete Riemannian $d-$manifold and $\phi_n:M_n\rightarrow\mathbb{R}$ is smooth, is convergent in the sense of Benjamini-Schramm, or just BS-converges, if there exists a probability measure $\mu$ on $\mathcal{E}^d$ such that the sequence $\mu_{M_n,\phi_n}$ converges to $\mu$ in the weak$^*$ topology. 
\end{definition}
\begin{definition}[Formulation of Berry's conjecture in BS form, \cite{Abert2018}]\label{C2}
	Let $M$ be a compact $d$-dimensional manifold and $M_n=(M,g_n)$ be given by the rescaling of $M$ by $\sqrt{\lambda_n}$; and let $(\psi_n)_n$ be an orthonormal basis of $L^2(M)$ that consists of eigenfunctions associated to eigenvalues $\lambda_n$. Then, we have the Berry property in BS form if $(M_n,\psi_n)$ BS converges to the isotropic monochromatic Gaussian random field with eigenvalue $1$, $\Psi_{Berry}$. This means that $\mu_{M_n, \psi_n}$ converges to the probability measure $\mu_{Berry}$ on the space of smooth functions on $\mathbb{R}^d$.
\end{definition}
Let us try to write this otherwise. We recall from the introduction that the probability measure $\mu_{Berry}$ associated to the random field $\Psi_{Berry}$ is supported on functions $u$ such that $-\Delta u=u$. In other words, $\Psi_{Berry}$ is almost surely an eigenfunction of eigenvalue $1$ of the Laplacian and, in particular, $\Psi_{Berry}$ is almost surely a smooth function on $\mathbb{R}^d$. 

Moreover, notice that, as a Riemannian metric is infinitesimally Euclidean, it follows from the definition that, as $n\rightarrow\infty$, the sequence $(M_n)$ BS-converges to $\mathbb{R}^d$. Precisely, because of the weak$^*$ compactness, $\mu_{M_n}$ converges to a unimodular measure on $\mathcal{M}^d$. By the rescaling, $M_n$ is closer to $\mathbb{R}^d$ whenever $n\rightarrow\infty$, so the limit towards which $\mu_{M_n}$ converges should be the Dirac measure at $[\mathbb{R}^d,0]\in\mathcal{M}^d$. Note that $\mathbb{R}^d$ being homogeneous, the limit measure does not depend on a particular choice of base point. We loosely say that $(M_n)$ BS-converges towards $\mathbb{R}^d$. Also remember that if $\phi:M\rightarrow\mathbb{R}$ is an eigenfunction of the Laplacian on $M$ with eigenvalue $\lambda$, then the very same function is an eigenfunction on $M_n$ with eigenvalue $\lambda'=\lambda/\lambda_n$. 

We consider now the sequence inside $\mathcal{E}^d$ given by\begin{equation}
\mathcal{S}_{M,p}=\left\{\begin{array}{l|l}
	[M_n, p, \psi_n] & \begin{array}{l}
	M_n=(M,g_n) \text{, rescaling of $M$ by }\sqrt{\lambda_n};\\
		p \in M; \psi_n\text{ eigenfunction associated to }\lambda_n
	\end{array}
\end{array}\right\} _{n=1}^{\infty}
\end{equation}Let us pay attention to bounded, continuous functionals of two specific kinds:\begin{itemize}
	\item $F:\mathcal{E}^d\rightarrow\mathbb{R}$ depending only on $[M,p]$. Then $F$ can be seen as a functional on $\mathcal{M}^d$ and $\mu_{M_n, \psi_n}(F)\equiv\mu_{M_n}(F)$. As we know that $\mu_{M_n}$ converges to the Dirac measure at  $[\mathbb{R}^d,0]\in\mathcal{M}^d$, then $\lim_{n \rightarrow \infty}\mu_{M_n}(F)=F\left([\mathbb{R}^d,0]\right)$.
	\item  $G:\mathcal{E}^d\rightarrow\mathbb{R}$ depending only on $\phi(\exp_p)$ in a small neighborhood of $0$ in $\mathbb{R}^d$. In the case of the sequence we are considering this is precisely $G([M_n,p,\psi_n])=G(\psi_n(\Exp_p(\cdot/\sqrt{\lambda_n})))$, where we can use the orthonormal basis around $p$ because $\lambda_n\rightarrow\infty$ and $G$ can be seen as a functional on $C^\infty(\mathbb{R}^d)$. Notice that we are making an abuse of notation by using the same letter $G$ for both a functional on $\mathcal{E}^d$ and $C^\infty(\mathbb{R}^d)$. Then, using the exponential map $\Exp$, $\mu_{M_n, \psi_n}$ induces a measure on $C^\infty(\mathbb{R}^d)$, call it $\nu_{M_n,\psi_n}$ whose limit (which we will assume that exists) is supported in functions satisfying the Helmholtz equation $\Delta\psi+\psi=0$, denoted by $FP$. This can be seen in the following lemma. 
\end{itemize} 
\begin{lemma}
	The limit $\lim_{n \rightarrow \infty}\nu_{M_n, \psi_n}$ is a probability measure supported in $FP$.
\end{lemma}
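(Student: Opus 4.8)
The plan is to separate the statement into a soft measure-theoretic reduction and one quantitative elliptic estimate for the rescaled eigenfunctions $\phi_p^n(y)=\psi_n(\Exp_p(y/\sqrt{\lambda_n}))$, recalling that $\nu_{M_n,\psi_n}$ is exactly the push-forward of the normalized volume of $M$ under $p\mapsto\phi_p^n$. First I would record that $FP=\{f\in C^\infty(\mathbb{R}^d):\Delta f+f=0\}$ is \emph{closed} in $C^\infty(\mathbb{R}^d)$, since $f\mapsto(\Delta f+f)(x)$ is continuous for the smooth topology; hence its complement is the countable union $FP^{c}=\bigcup_{R,j\in\mathbb{N}}A_{R,j}$ of the \emph{open} sets $A_{R,j}=\{f:\|\Delta f+f\|_{C^0(\overline{B(0,R)})}>1/j\}$. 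Writing $\mu=\lim_n\nu_{M_n,\psi_n}$ for the limit measure, the portmanteau theorem gives $\mu(A_{R,j})\le\liminf_n\nu_{M_n,\psi_n}(A_{R,j})$, so the task reduces to showing $\nu_{M_n,\psi_n}(A_{R,j})\to0$ for each fixed $R,j$; summing then yields $\mu(FP^{c})=0$, i.e.\ $\mu$ is carried by $FP$. That $\mu$ is genuinely a probability measure (no escape of mass) I would obtain from tightness of $\{\nu_{M_n,\psi_n}\}$, which comes from the same estimates below combined with the normalization $\|\psi_n\|_2^2=\Vol(M)$.

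For the quantitative input I would work in geodesic normal coordinates $z$ at $p$ attached to the orthonormal frame $(V_1^m,\dots,V_d^m)$, so that $\Delta_g\psi_n=-\lambda_n\psi_n$ in these coordinates and $\phi_p^n(y)=\psi_n(z)$ with $z=y/\sqrt{\lambda_n}$. The chain rule gives $\partial_y^\alpha\phi_p^n(y)=\lambda_n^{-|\alpha|/2}(\partial_z^\alpha\psi_n)(z)$, in particular $\Delta_{\mathbb{R}^d}\phi_p^n(y)=\lambda_n^{-1}(\Delta_{\mathbb{R}^d}\psi_n)(z)$, so splitting $\Delta_{\mathbb{R}^d}=\Delta_g+(\Delta_{\mathbb{R}^d}-\Delta_g)$ and using the eigenvalue equation yields the exact identity
\[
\Delta_{\mathbb{R}^d}\phi_p^n(y)+\phi_p^n(y)=\frac{1}{\lambda_n}\big[(\Delta_{\mathbb{R}^d}-\Delta_g)\psi_n\big](z).
\]
Here $\Delta_{\mathbb{R}^d}-\Delta_g=a^{ij}(z)\partial_i\partial_j+b^i(z)\partial_i$ where, in normal coordinates, $a^{ij}$ vanishes to second order and $b^i$ to first order at the center, so $|a^{ij}(z)|\le C|z|^2$ and $|b^i(z)|\le C|z|$ with $C$ uniform in $p\in M$ by compactness. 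Since $|z|\le R/\sqrt{\lambda_n}$ when $|y|\le R$, re-expressing $\partial_z\psi_n,\partial_z^2\psi_n$ through the chain rule in terms of $\partial_y\phi_p^n,\partial_y^2\phi_p^n$ turns the identity into the bound $\|\Delta_{\mathbb{R}^d}\phi_p^n+\phi_p^n\|_{C^0(\overline{B(0,R)})}\le C_R\lambda_n^{-1}\big(\|\phi_p^n\|_{C^1(\overline{B(0,R)})}+\|\phi_p^n\|_{C^2(\overline{B(0,R)})}\big)$.

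It remains to average the right-hand side over $p$. From the eigenvalue equation together with standard elliptic (Bochner-type) estimates one has $\int_M|\nabla^k\psi_n|^2\,dx\le C_k\lambda_n^k\Vol(M)$ for each $k$. Expressing $\|\phi_p^n\|_{C^l(\overline{B(0,R)})}$ in terms of $\lambda_n^{-l/2}\sup_{B_g(p,\,cR/\sqrt{\lambda_n})}|\nabla^{\le l}\psi_n|$, applying the Sobolev inequality on the geodesic balls $B_g(p,cR/\sqrt{\lambda_n})$, whose volume is comparable to $\lambda_n^{-d/2}$ with uniform constants thanks to bounded geometry, and integrating in $p$ via Fubini, I would obtain $\int_M\big(\|\phi_p^n\|_{C^1(\overline{B(0,R)})}^2+\|\phi_p^n\|_{C^2(\overline{B(0,R)})}^2\big)\,dp\le C_R\Vol(M)$ uniformly in $n$ (for $n$ large). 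Combined with the previous inequality this gives $\int_M\|\Delta_{\mathbb{R}^d}\phi_p^n+\phi_p^n\|_{C^0(\overline{B(0,R)})}^2\,dp\le C_R\lambda_n^{-2}\Vol(M)\to0$, and Chebyshev's inequality then forces $\nu_{M_n,\psi_n}(A_{R,j})\to0$. The identical $p$-uniform $C^k$ bounds on $\overline{B(0,m)}$, tested against the sets $\{f:\|f\|_{C^k(\overline{B(0,m)})}\le N_{k,m}\ \text{for all }k,m\}$, which are compact in $C^\infty(\mathbb{R}^d)$ by Arzel\`{a}--Ascoli and diagonalization, give the tightness that upgrades $\mu$ to a probability measure.

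I expect the main obstacle to be this quantitative step: one must check that the two factors of $\lambda_n^{-1/2}$ gained from the near-flatness of $g$ at the scale $\lambda_n^{-1/2}$ (reflected in $a^{ij}=O(\lambda_n^{-1})$ and $b^i=O(\lambda_n^{-1/2})$ along $z=z(y)$) exactly defeat the high-frequency growth of the derivatives of $\psi_n$, which is how the large factors $\lambda_n$ reappear after the rescaling; \emph{and} that every constant is uniform in the base point $p$---which is where one must phrase the elliptic and Sobolev estimates scale-invariantly and use the bounded geometry of the compact manifold $M$. Once this estimate is in place, the portmanteau reduction and the tightness argument are routine.
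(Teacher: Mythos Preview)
Your argument is correct and takes a genuinely different route from the paper's. The paper argues by contradiction: it supposes some $f\notin FP$ lies in the support of the limit, so that a set of base points of positive volume has $\phi_p^n$ in a Fr\'echet ball $B(f,\epsilon_0)$; on that ball it invokes continuity of $g\mapsto d(g,-\Delta g)$ to get a uniform lower bound $d(\phi_p^n,-\Delta\phi_p^n)\ge\epsilon_1>0$, and then shows $d(\Delta\phi_p^n,\Delta_{M_n}\phi_p^n)\to0$ using that the derivatives of $\phi_p^n$ are bounded \emph{because} $\phi_p^n$ sits in a fixed ball of $C^\infty(\mathbb{R}^d)$. In particular the paper never averages over $p$ and never invokes the $L^2$-normalization of $\psi_n$: the needed $C^k$ control is extracted from the topology of $C^\infty$ rather than from analysis. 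By contrast you go the quantitative way---the exact identity $\Delta\phi_p^n+\phi_p^n=\lambda_n^{-1}(\Delta_{\mathbb{R}^d}-\Delta_g)\psi_n$, the $O(|z|^2)$ and $O(|z|)$ vanishing of the coefficients in normal coordinates, rescaled elliptic/Sobolev estimates combined with $\int_M|\nabla^k\psi_n|^2\le C_k\lambda_n^k\Vol(M)$ and Fubini to bound $\int_M\|\phi_p^n\|_{C^2}^2\,dp$ uniformly, then Chebyshev and portmanteau. What you gain is a self-contained argument that also yields tightness (hence the limit really is a \emph{probability} measure, a point the paper assumes rather than proves), and that makes transparent exactly how the two factors of $\lambda_n^{-1/2}$ from near-flatness cancel the growth of $\nabla^{\le2}\psi_n$. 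What the paper's approach buys is brevity: by localizing to a ball around $f$ it avoids the rescaled Sobolev machinery altogether.
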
\begin{proof}
Let us proceed by contradiction and assume the opposite. Suppose that there exists a function $f\in C^\infty(\mathbb{R}^d)$ which is not in $FP$ but it is in the support of the limit measure $\lim_{n \rightarrow \infty}\nu_{M_n, \psi_n}$, meaning that for any $\epsilon>0$ it is true that $\lim_{n \rightarrow \infty}\nu_{M_n, \psi_n}\left(B(f,\epsilon)\right)>0$. Here the ball $B$ is constructed using the distance $d$ given by the Fréchet structure of $C^\infty(\mathbb{R}^d)$ defined in \eqref{dist}, and has positive mass asymptotically.

Recall that $C^\infty(\mathbb{R}^d)$ with this distance is a locally compact metric space and that $FP$ is closed in the topology of smooth convergence of derivatives on compact sets. Then we can find a small $\epsilon_0>0$ such that $\dist\left(\overline{B(f,\epsilon_0)}, FP\right)\geq c_{\epsilon_0}>0$ and also  \begin{equation}
	\liminf_{n\rightarrow\infty}\Vol\left(\left\{p\in M:\phi_p^n\in B(f,\epsilon_0)\right\}\right)\geq c'_{\epsilon_0}>0.
\end{equation}

On the one hand, it can be easily found a lower positive bound for $d(\phi_p^n,-\Delta \phi_p^n)$. Just notice that the map $g\mapsto d(g,-\Delta g)$ is continuous and strictly positive on the ball $\overline{B(f,\epsilon_0)}$ (because it is not in $FP$). Therefore, it is bounded from below by some positive constant $\epsilon_1$. In particular, as far as we consider the points $p\in M$ such that $\phi_p^n\in B(f,\epsilon_0)$, we have $d(\phi_p^n,-\Delta \phi_p^n)\geq \epsilon_1>0$.
Notice that this bound does not depend on $n$, for any $n$ large enough. 

On the other hand, it can be easily seen that for those points $p$ with $\phi_p^n$ in $B(f,\epsilon_0)$, the set of $\phi_p^n$ is  bounded in $C^\infty(\mathbb{R}^d)$ and so all the derivatives of $\phi_p^n$ are uniformly bounded in $n$. Also notice that \begin{equation}
	d(\Delta\phi_{p}^n,-\phi_{p}^n)=d(\Delta\phi_{p}^n,\Delta_{M_n}\phi_p^n)=d((\Delta-\Delta_{M_n})\phi_p^n,0)=o_{n\rightarrow\infty}(1)\cdot d(\phi_{p}^n,0).
\end{equation} In the second last equality, we have used that the distance is a series of semi-norms, so that $d(u,v)=d(u-v,0)$. Finally, in the last part we have used that $\left\|(\Delta-\Delta_{M_n})\phi_p^n\right\|_{k,m}=C^{k,m}/\sqrt{\lambda_n}$. We get this expression thanks to the bound of all the derivatives of $\phi_p^n$ and the explicit form of $\Delta_{M_n}$ depending on the metric $g_n=\sqrt{\lambda_n}\cdot g$. 

To conclude we recall that, in particular, the eigenfunctions $\phi_p^n$ are uniformly bounded in $n$ for the points $p$ we are considering. As a consequence,  $	d(\Delta\phi_{p}^n,-\phi_{p}^n)=o_{n\rightarrow\infty}(1)$. This contradict the positive bound already found, and so we can assure that such an $f$ does not exists, concluding that $\mu_{Berry}$ is supported in $FP$.
\end{proof}

The set of linear combinations of products of these two kinds of functions is a subset of $C(\mathcal{E}^d)$ that separates points, i.e., given distinct points $\alpha,\beta\in\mathcal{E}^d$, there exists a linear combination of products of functions of this type such that $\sum_iF_i(\alpha)G_i(\alpha)\neq \sum_iF_i(\beta)G_i(\beta)$. By mean of the Stone–Weierstrass theorem, we get to know that this set is dense is the space of bounded, continuous functionals on $\mathcal{E}^d$. Therefore, to study the limit of a measure in the set $\mathcal{E}^d$, it is enough to study it over products of $F$ and $G$. The sum can be taken outside of the limit using linearity of the integral. 

Then we can conclude that any weak limit of $\mu_{M_n,\psi_n}$ is supported in \begin{equation}
	\left\{[\mathbb{R}^d,p,\psi]\in\mathcal{E}^d: p\in\mathbb{R}^d, \Delta\psi=-\psi\right\}=\left\{[\mathbb{R}^d,0,\psi]\in\mathcal{E}^d:  \Delta\psi=-\psi\right\}.
\end{equation} This set is equivalent to $FP/\sim$ where $f,g\in FP$ satisfy $f\sim g$ if, and only if, there exists an origin preserving isometry $S$ such that $f=g\circ S$. We therefore loosely identify such a weak limit with a random field on $\mathbb{R}^d$.

 All considered, we can now reformulate the conjecture proposed in \cite{Abert2018} as follows: $\mu_{M_n,\phi_n}$ converges to $\mu_{Berry}$ in the weak$^*$ topology or, what is the same, for each bounded, continuous real functional $F$ of $C^\infty(\mathbb{R}^d)$ which is invariant under origin preserving isometries, we have \begin{equation}
	\lim _{n \rightarrow \infty} \nu_{M_n,\phi_n}(F)=\mu_{Berry}(F)=\E_{\mu_{Berry}}[F]	=\int_{C^\infty(\mathbb{R}^d)}Fd\mu_{Berry}.
\end{equation} Here $F$ is only defined in $C^\infty(\mathbb{R}^d)$ but as both $\mu_{Berry}$ and $	\lim _{n \rightarrow \infty} \nu_{M_n,\phi_n}$ are supported in $\left\{[\mathbb{R}^d,p,\psi]\in\mathcal{E}^d: p\in\mathbb{R}^d, \Delta\psi=\psi\right\}$, we only care about these points. As we are only considering classes up to pointed isometries, we notice that, for any isometry $S$ of $\mathbb{R}^d$ that preserves the origin $S(0)=0$, then it should be true that $F(\psi)=F(\psi\circ S)$, this is, we only consider functionals that are invariant up to origin preserving isometries.

In the next two Sections we prove both implications of this theorem. First, in Section \ref{1} we show that the conjecture of definition \ref{C1} implies the conjecture given by \ref{C2}. Then, in Section \ref{2}, we prove the reciprocal, that the conjecture of definition \ref{C2} implies the conjecture given by \ref{C1}.
\section{Local weak limit formulation implies BS formulation}\label{1}
We start by noticing the following: let $U\subset M$ be a Borel set of positive Lebesgue measure, and suppose that we have the Berry's conjecture with the local formulation on $U$. Then for any Borel set $U'\subset U$ of positive Lebesgue measure, we also have the conjecture on $U'$. A proof of this fact can be found in \cite{Ingremeau2021} for the case of a domain of $M=\mathbb{R}^d$ and can be easily modified for our case. This means that if we consider $M$ covered by a family of open sets where the Berry property holds, then for any open subset with an orthonormal frame we would also have the Berry property in that subset. 

We continue by considering the map \begin{equation}
H^n:	M \rightarrow \mathcal{E}^d, p \mapsto[(M,g_n), p, \psi_n],
\end{equation} and applying a change of variables together with the definition of $\mu_{M_n,\phi_n}$ to write, for any bounded continuous function $F$ in $\mathcal{E}^d$, \begin{equation}
\mu_{M_n,\psi_n}(F)=\int_{M}F(H^n(p))d\mu_{Unif}(p)=\frac{1}{\Vol(M)}\int_{M}F(H^n(p))dp.
\end{equation} Let us now split this integral using the partition introduced at the beginning of Section 2: \begin{equation}
\mu_{M_n,\psi_n}(F)=\frac{1}{\Vol(M)}\sum_{m=1}^{m_{max}}\int_{U_m}F(H^n(p))dp.
\end{equation}
As said in the previous section, it is enough to study products of functions of the two mentioned types:
\begin{equation}
	\mu_{M_n,\psi_n}(F\cdot G)=\frac{1}{\Vol(M)}\sum_{m=1}^{m_{max}}\int_{U_m}F([M_n,p])G(\phi^n_p)dp.
\end{equation}

Then, 
\begin{equation}
\frac{1}{\Vol(M)}\sum_{m=1}^{m_{max}}\int_{U_m}F([M_n,p])G(\phi^n_p)dp=\sum_{m=1}^{m_{max}}\frac{\Vol(U_m)}{\Vol(M)}\left(\frac{1}{\Vol(U_m)}\int_{U_m}F([M_n,p])G(\phi^n_p)dp\right).
\end{equation}
Let us consider first the case when $F\equiv c$, some constant. Using, by hypothesis, that the Berry property is satisfied with the local weak limit formulation, we can assert that \begin{equation}
	\lim_{n\rightarrow\infty}\nu_{M_n,\psi_n}(c\cdot G)=\sum_{m=1}^{m_{max}}\frac{\Vol(U_m)}{\Vol(M)}\lim_{n \rightarrow \infty}\left(\frac{1}{\Vol(U_m)}\int_{U_m}c\cdot G(\phi^n_p)dp\right)=
\end{equation}
\begin{equation}
	\sum_{m=1}^{m_{max}}\frac{\Vol(U_m)}{\Vol(M)}\E_{\mu_{Berry}}[c\cdot G]=\E_{\mu_{Berry}}[c\cdot G]=\mu_{Berry}(c\cdot G).
\end{equation}
The next step is to notice that, since $F$ depends only on the manifold and $G$ is a bounded functional, we have for any $m$, \begin{equation}
	\left|\int_{U_m}F([M_n,p])G(\phi^n_p)dp-\int_{U_m}F([\mathbb{R}^d,0])G(\phi^n_p)dp\right|\leq\end{equation}\begin{equation} C\cdot\sup_{p\in M}\left|F([M_n,p])-F([\mathbb{R}^d,0])\right|=C\cdot\epsilon_n\xrightarrow[n\rightarrow\infty]\ 0
\end{equation} Therefore, \begin{equation}
	\lim_{n\rightarrow\infty}\left|\mu_{M_n,\phi_n}(F\cdot G)-\mu_{Berry}(c\cdot G)\right|\leq\lim_{n\rightarrow\infty} C\cdot\epsilon_n\left|\mu_{M_n,\phi_n}(c\cdot G)-\mu_{Berry}(c\cdot G)\right|=0,
\end{equation}and so \begin{equation}
	\lim_{n\rightarrow\infty}\mu_{M_n,\psi_n}(F\cdot G)=\lim_{n\rightarrow\infty}\mu_{M_n,\psi_n}(c\cdot G)=\lim_{n\rightarrow\infty}\nu_{M_n,\psi_n}(c\cdot G)=\mu_{Berry}(c\cdot G),
\end{equation}for any $G$ bounded, continuous functional on $C^\infty(\mathbb{R}^d)$. $c\cdot G$ is also any bounded, continuous functional on $C^\infty(\mathbb{R}^d)$. Notice that here we have again made an abuse of notation by using the same letter $G$ for a certain class of functionals on $\mathcal{E}^d$ and for functionals on $C^\infty(\mathbb{R}^d)$. This concludes the proof of this implication. 
\section{BS formulation implies Local weak limit formulation}\label{2}
For the other implication we assume that $\lim_{n\rightarrow\infty}\nu_{M_n,\psi_n}(F)=\mu_{Berry}(F)$, for any bounded, continuous real functional $F$ of $C^\infty(\mathbb{R}^d)$ which is invariant under origin preserving isometries. This means that \begin{equation}
\lim_{n \rightarrow \infty}\left\langle\nu_{M_n, \phi_n},F\right\rangle=\sum_{m=1}^{m_{max}}\frac{\Vol(U_m)}{\Vol(M)}\lim_{n \rightarrow \infty}\left(\frac{1}{\Vol(U_m)}\int_{U_m}F(\phi^n_p)dp\right)=\left\langle\mu_{Berry},F\right\rangle,\end{equation} when tested against bounded, continuous real functionals $F$ of $C^\infty(\mathbb{R}^d)$ which are invariant under origin preserving isometries.

First, we will see that, up to a subsequence, for any $m$, there exists a limit $\mu_m$ such that for any functional $G$ (which is not necessarily invariant w.r.t. isometries) we have: \begin{equation}\label{check}
	\exists \lim_{n \rightarrow \infty}\left(\frac{1}{\Vol(U_m)}\int_{U_m}G(\phi^n_p)dp\right)=\mu_m(G). 
\end{equation}

To prove this let us define the probability measure on the space of $C^\infty(\mathbb{R}^d)$, $\mu_m^n$ given by\begin{equation}
	\left\langle\mu_m^n,G\right\rangle=\frac{1}{\Vol(U_m)}\int_{U_m}G(\phi^n_p)dp,
\end{equation}for any $G$ bounded, continuous functional; and recall Prokhorov's theorem for this particular case: the collection $\left\{\mu_m^n\right\}_{n=1}^\infty$ is tight if, and only if, its closure is sequentially compact in the space of probability measures on $C^\infty(\mathbb{R}^d)$, i.e., up to a subsequence, there exists the limit $\lim_{n \rightarrow \infty}\mu_m^n=\mu_m$.

The only thing that is left to obtain \eqref{check} is to check that $\left\{\mu_m^n\right\}_{n=1}^\infty$is tight but this is essentially \cite[Lemma 1]{Ingremeau2021}, where this measures are shown to be tight in the bigger space of $C^k$ functions. The aim now is to prove that this limit is equal to $\mu_{Berry}$ for any $1\leq m\leq m_{\max}$.

As can be seen in \cite{Ingremeau2021}, this limit $\mu_m$ is translation invariant. We recall the proof of this fact here, but first we need to introduce few definitions.

Let $y\in\mathbb{R}^d$. For any $f\in C^\infty(\mathbb{R}^d)$, we define $\tau_y f\in C^\infty(\mathbb{R}^d)$ by $(\tau_yf)(\cdot)=f(y+\cdot)$. If $F$ is now a functional on $C^\infty(\mathbb{R}^d)$, we define $\tau_yF$ by $(\tau_yF)(f)=F(\tau_yf)$ for all $f\in C^\infty(\mathbb{R}^d)$. Finally, if $\mu$ is a measure on $C^\infty(\mathbb{R}^d)$, we define $\tau_y\mu$ by $\left\langle\tau_y\mu,F\right\rangle=\left\langle\mu,\tau_yF\right\rangle$ for all bounded, continuous functionals $F$.

\begin{lemma}
	Let $\mu_m$ be a limit as considered in this section. Then for any $y\in\mathbb{R}^d$, we have $\tau_y\mu_m=\mu_m$.
\end{lemma}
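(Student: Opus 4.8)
The plan is to exploit the fact that $\mu_m$ is the weak$^*$-limit (along a subsequence) of the measures $\mu_m^n$, whose defining integrals are over the chart $U_m$, and to show that translating the test field by $y$ can be absorbed, up to a vanishing error, into moving the base point $p$ by a geodesic step of length $|y|/\sqrt{\lambda_n}$. Concretely, for a bounded continuous functional $G$ on $C^\infty(\mathbb{R}^d)$ and $y\in\mathbb{R}^d$, I would compare
\begin{equation}
\langle\tau_y\mu_m^n,G\rangle=\frac{1}{\Vol(U_m)}\int_{U_m}G\bigl(\tau_y\phi_p^n\bigr)\,dp
\end{equation}
with $\langle\mu_m^n,G\rangle$. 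The key geometric observation is that $\tau_y\phi_p^n(\cdot)=\psi_n\bigl(\Exp_p\bigl((y+\cdot)/\sqrt{\lambda_n}\bigr)\bigr)$, and since $\Exp_p$ is built from the exponential map, $\Exp_p(y/\sqrt{\lambda_n})$ is (for large $n$) a point $p_y^n\in M$ at Riemannian distance $|y|/\sqrt{\lambda_n}+O(\lambda_n^{-3/2})$ from $p$; moreover $\tau_y\phi_p^n$ and $\phi_{p_y^n}^n$ differ only by precomposition with a diffeomorphism of $\mathbb{R}^d$ that converges to the identity in $C^\infty_{\mathrm{loc}}$ as $n\to\infty$ (the rescaled chart transition absorbing the difference between $\Exp_p$ translated and $\Exp_{p_y^n}$, together with the fact that the orthonormal frame $V_j^m$ varies smoothly). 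Hence by continuity of $G$ and uniform boundedness of the relevant fields (tightness, as used for \eqref{check}),
\begin{equation}
\left|\frac{1}{\Vol(U_m)}\int_{U_m}G(\tau_y\phi_p^n)\,dp-\frac{1}{\Vol(U_m)}\int_{U_m}G(\phi_{p_y^n}^n)\,dp\right|\xrightarrow[n\to\infty]{}0.
\end{equation}

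The second step is to handle the change of variables $p\mapsto p_y^n$. This map is a diffeomorphism of $U_m$ onto its image, with Jacobian $1+O(\lambda_n^{-1})$ (it is $\lambda_n^{-1/2}$-close to the identity in $C^1$), and its image differs from $U_m$ by a set of volume $O(\lambda_n^{-1/2})$ near $\partial U_m$. Therefore
\begin{equation}
\frac{1}{\Vol(U_m)}\int_{U_m}G(\phi_{p_y^n}^n)\,dp=\frac{1}{\Vol(U_m)}\int_{U_m}G(\phi_q^n)\,dq+o_{n\to\infty}(1),
\end{equation}
using $\|G\|_\infty<\infty$ to control the boundary discrepancy and the Jacobian error. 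Combining the two steps gives $\langle\tau_y\mu_m^n,G\rangle-\langle\mu_m^n,G\rangle\to 0$, and passing to the limit along the subsequence defining $\mu_m$ yields $\langle\tau_y\mu_m,G\rangle=\langle\mu_m,G\rangle$ for all bounded continuous $G$, i.e. $\tau_y\mu_m=\mu_m$.

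**Main obstacle.** The delicate point is the uniformity in $p$ of the comparison between $\tau_y\phi_p^n$ and $\phi_{p_y^n}^n$: one must check that the diffeomorphism of $\mathbb{R}^d$ relating them (which encodes how the Riemannian exponential fails to commute with a translation of its argument, at scale $\lambda_n^{-1/2}$) converges to the identity in $C^\infty$ on compact sets \emph{uniformly over $p\in U_m$}, and that the derivatives of $\phi_p^n$ needed to run the continuity-of-$G$ argument are bounded uniformly in both $p$ and $n$ — this is exactly the content of the tightness estimate (\cite[Lemma 1]{Ingremeau2021}) already invoked for \eqref{check}, applied after the shift. The boundary effects at $\partial U_m$ are harmless precisely because $G$ is bounded, but one should be slightly careful that $p_y^n$ still lies in a chart where an orthonormal frame is available; since $y$ is fixed and the displacement shrinks, this holds for $n$ large, and the argument is local to $U_m$ anyway. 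I would also remark, following \cite{Ingremeau2021}, that this translation invariance is what eventually forces $\mu_m$ to be a stationary field, a fact used downstream to identify $\mu_m$ with $\mu_{Berry}$.
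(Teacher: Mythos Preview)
Your proposal is correct and follows essentially the same route as the paper's proof: translate the rescaled field by $y$, identify this (up to an $O(\lambda_n^{-1/2})$ geometric error) with moving the base point to $\Exp_p(y/\sqrt{\lambda_n})$, then absorb the base-point shift via a change of variables whose Jacobian and boundary discrepancy contribute $o_{n\to\infty}(1)$. If anything, your discussion of the uniformity in $p$ and the role of tightness is more careful than the paper's own sketch, which simply writes the $O(1/\sqrt{\lambda_n})$ and symmetric-difference errors without further comment.
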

\begin{proof}
Notice that $\exists\lim_{n\rightarrow\infty}\mu_m^n=\mu_m.$ We now compute, for any $y\in\mathbb{R}^d$ and $F$ bounded, continuous functional,\begin{equation}
\left\langle\mu_m^n,\tau_yF\right\rangle=\frac{1}{\Vol(U_m)}\int_{U_m}F(\tau_y\phi^n_p)dp=\frac{1}{\Vol(U_m)}\int_{U_m}F\left(\psi_n\left(\Exp_p\left(\frac{\cdot+y}{\sqrt{\lambda_n}}\right)\right)\right)dp=
\end{equation}\begin{equation}
\frac{1}{\Vol(U_m)}\int_{U_m}F\left(\psi_n\left(\Exp_{\Exp_p(y/\sqrt{\lambda_n})}\left(\frac{\cdot}{\sqrt{\lambda_n}}\right)\right)\right)+\mathcal{O}\left(1/\sqrt{\lambda_n}\right)dp=
\end{equation}
\begin{equation}
	\frac{1}{\Vol(U_m)}\int_{U_m}F\left(\phi^n_p\right)dp+\mathcal{O}\left(1/\sqrt{\lambda_n}\right)+\mathcal{O}\left(\Vol\left(O_m\Delta(O_m-y/\sqrt{\lambda_n})\right)\right)=
\end{equation}\begin{equation}
\left\langle\mu_m^n,F\right\rangle+o_{n\rightarrow\infty}.
\end{equation}Here $O_m$ is the Euclidean domain that goes to $U_m$ by $\Exp_p$, $O_m\Delta(O_m-y/\sqrt{\lambda_n})$ is the symmetric difference between $O_m$ and $O_m-y/\sqrt{\lambda_n}$ and its volume goes to zero with $n$, since $\Vol\left(O_m\cap(O_m-y/\sqrt{\lambda_n})\right)\xrightarrow[n\rightarrow\infty]\ 0$. In the second line we have used that fact that $F$ is a bounded, continuous functional and we are using geodesic coordinates in a small neighborhood of the origin.

Taking the limit $n\rightarrow\infty$, we obtain $\mu_m(\tau_yF)=\mu_m(F)$ for any $F$, so that $\tau_y\mu_m =\mu_m$.
\end{proof}

It is easy to check that $(FP,\mathcal{B}(FP),\mu_{Berry})$ is a probability space on which $\mathbb{R}^d$ acts by the translations $\tau$ as measure-preserving transformations. By the Fomin-Grenander-Maruyama theorem \cite[Section B]{NZ2016}, $\mu_{Berry}$ is ergodic for the action of the translations, which means that for every set $A\in\mathcal{B}(FP)$ satisfying $\mu_{Berry}\left(\left(\tau_yA\right)\Delta A\right)=0$, either $\mu_{Berry}(A)=0$, or $\mu_{Berry}(A)=1$. 

It can also be proved, see for instance \cite{einsiedler2010ergodic}, that ergodic measures are the extreme points of the set of the action invariant measures. This means that $\mu_{Berry}$ can not be expressed as a strict convex combination
of two different translation invariant probability measures. It can be generalized by induction to any finite-length convex combination ans it is also true for general integration as can be see by the following lemma.

\begin{lemma}\label{int}
	If $\mu$ is a Borel measure that is ergodic with respect to translations and we have a decomposition $\mu=\int_Y v_y dm(y)$, where $Y$ is a measurable set, $v_y$ are all translations invariant probability measures and the integral is defined using the measure $dm$ of $Y$. Then $v_y=\mu$ for $m$-a.e. $y$.
\end{lemma}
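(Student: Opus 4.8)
The plan is to deduce the lemma from the pointwise (Wiener) ergodic theorem for the translation action of $\mathbb{R}^d$, together with the elementary observation that the decomposition $\mu=\int_Y v_y\,dm(y)$ sends each $\mu$-null set to a $v_y$-null set for $m$-a.e.\ $y$.

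First I would fix a bounded continuous functional $F$ on $C^\infty(\mathbb{R}^d)$ and introduce the ball averages
\[
A_R F(\omega):=\frac{1}{\Vol(B_R)}\int_{B_R}F(\tau_z\omega)\,dz,\qquad R>0,
\]
where $B_R\subset\mathbb{R}^d$ is the Euclidean ball of radius $R$. Since the translations $(\tau_z)_{z\in\mathbb{R}^d}$ form a measurable, $\mu$-preserving action and $\mu$ is ergodic, the pointwise ergodic theorem (Wiener's ergodic theorem for $\mathbb{R}^d$-actions, averaging over balls) produces a $\mu$-null set $N_F$ outside of which $A_RF(\omega)\to\int F\,d\mu$ as $R\to\infty$. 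From $\mu=\int_Y v_y\,dm(y)$ we get $0=\mu(N_F)=\int_Y v_y(N_F)\,dm(y)$, and since $v_y(N_F)\ge0$ this forces $v_y(N_F)=0$ for $m$-a.e.\ $y$; hence for $m$-a.e.\ $y$ the convergence $A_RF\to\int F\,d\mu$ holds $v_y$-almost everywhere.

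Next I would use the translation invariance of each $v_y$: for every $z$ one has $\int F(\tau_z\omega)\,dv_y(\omega)=\int F\,dv_y$, so Fubini gives $\int A_RF\,dv_y=\int F\,dv_y$ for every $R$. On the other hand $|A_RF|\le\|F\|_\infty$ and, for $m$-a.e.\ $y$, $A_RF\to\int F\,d\mu$ $v_y$-a.e., so dominated convergence yields $\int F\,dv_y=\lim_{R\to\infty}\int A_RF\,dv_y=\int F\,d\mu$, again for $m$-a.e.\ $y$. The exceptional $y$-set a priori depends on $F$, so to finish I would pick a countable family $\{F_j\}_{j\ge1}$ of bounded continuous functionals that determines Borel probability measures on $C^\infty(\mathbb{R}^d)$ (such a family exists because $C^\infty(\mathbb{R}^d)$, with the Fréchet distance $d$ of \eqref{dist}, is a separable metric space), discard the corresponding countably many $m$-null sets, and conclude that $\int F_j\,dv_y=\int F_j\,d\mu$ for every $j$ and $m$-a.e.\ $y$; since the $F_j$ separate measures, $v_y=\mu$ for $m$-a.e.\ $y$.

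The only genuinely technical point is the first step: invoking the Wiener ergodic theorem over balls for the $\mathbb{R}^d$-action and checking that ``$\mu$-null $\Rightarrow$ $v_y$-null $m$-a.e.'' is legitimate — the latter being immediate from the hypothesis, and the former being classical, so no new obstacle arises. A more structural alternative would be to argue via Choquet theory, since ergodic measures are exactly the extreme points of the convex set of translation-invariant Borel probability measures and hence admit only the trivial barycentric representation $\delta_\mu$, which is precisely the assertion; but this would require extra care with compactness and metrizability of the set of invariant measures, so the ergodic-average computation above seems the most economical route.
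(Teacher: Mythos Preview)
Your argument is correct, but it is genuinely different from the paper's. The paper does not invoke the pointwise ergodic theorem at all; instead it argues by contradiction using only the fact (stated just before the lemma) that an ergodic measure is an \emph{extreme point} of the translation-invariant probability measures. Concretely: if the conclusion fails, there is a bounded continuous functional $F$ for which the set $A=\{y:\langle F,\mu\rangle<\langle F,v_y\rangle\}$ has $0<m(A)<1$, and then
\[
\mu=m(A)\cdot\frac{1}{m(A)}\int_A v_y\,dm(y)+m(A^c)\cdot\frac{1}{m(A^c)}\int_{A^c} v_y\,dm(y)
\]
is a nontrivial convex decomposition of $\mu$ into two translation-invariant probability measures, contradicting extremality because testing the first component against $F$ gives a value strictly larger than $\langle F,\mu\rangle$.

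Your route trades this structural argument for an analytic one: you push the $\mu$-a.e.\ convergence of Wiener averages down to $v_y$-a.e.\ via the decomposition, then combine with translation invariance of $v_y$ and dominated convergence. This is a perfectly standard and clean proof, and your handling of the dependence of the null set on $F$ via a countable separating family is exactly right. The trade-off is that you import a heavier black box (the pointwise $\mathbb{R}^d$-ergodic theorem), whereas the paper's proof is essentially self-contained once one accepts ``ergodic $=$ extremal''. Interestingly, the Choquet-type alternative you mention at the end is precisely the spirit of the paper's argument, only the paper avoids abstract Choquet machinery by working directly with a two-piece convex splitting.
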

\begin{proof}
	Let $\mathcal{T}(C^\infty)$ be the set of all continuous, bounded functionals in $C^\infty(\mathbb{R}^d)$. If both sets \begin{equation}\bigcup_{F\in \mathcal{T}(C^\infty)}\left\{y\in Y,\ \left\langle F,\mu\right\rangle<\left\langle F,v_y\right\rangle\right\}\end{equation}and 
	\begin{equation}\bigcup_{F\in \mathcal{T}(C^\infty)}\left\{y\in Y,\ \left\langle F,\mu\right\rangle>\left\langle F,v_y\right\rangle\right\}\end{equation}
	have measure zero then we have finished. Assume otherwise. Without loss of generality, we have a continuous functional $F$ such that \begin{equation}
		A=\left\{y\in Y,\ \left\langle F,\mu\right\rangle<\left\langle F,v_y\right\rangle\right\}
	\end{equation}satisfies $0<m(A)$. Notice that if $m(A)=1$ then we can not have  $\mu=\int_Y v_y dm(y)$, so $0<m(A)<1$. Now, we have that for all $G\in\mathcal{T}(C^\infty)$,\begin{equation}
	\left\langle G,\mu\right\rangle=m(A)\cdot\left\langle G,\frac{1}{m(A)}\int_A	v_y dm(y)\right\rangle+ m(A^c)\cdot	\left\langle G,\frac{1}{m(A^c)}\int_{A^c}v_y dm(y)\right\rangle.
\end{equation}This gives a representation of $\mu$ as a convex sum of two translation invariant probability measures. By ergodicity of $\mu$, each of the measures is in fact $\mu$. But this is a contradiction since integrating the first measure against $F$ is strictly bigger than $\left\langle F,\mu\right\rangle$. This conclude the proof. 
\end{proof}

Consider now the set $\Omega=\left\{\omega:\mathbb{R}^d\rightarrow\mathbb{R}^d,\ \omega\text{ is an isometry preserving the origin}\right\}$ and its Haar measure $dm$. For any bounded continuous functional $G$, we consider the following \begin{equation}
\widetilde{G}=\frac{1}{|\Omega|}\int_{\Omega}G(\omega(\cdot))dm(\omega),
\end{equation}which is defined using the Haar measure for the integral. 

Obviously, this functional is still bounded and continuous and now it is invariant by isometries that preserve the origin. 

Similarly to what we did before, if $\mu$ is a measure on $C^\infty(\mathbb{R}^d)$, we define $\widetilde{\mu}$ by $\left\langle\widetilde{\mu},G\right\rangle=\left\langle\mu,\widetilde{G}\right\rangle$ for all bounded, continuous functionals $G$.
This way, we have that, for all $m$ and any bounded continuous functional $G$ \begin{equation}
	\left\langle\widetilde{\mu_m},G\right\rangle=	\left\langle\mu_m,\widetilde{G}\right\rangle. 
\end{equation}This means that evaluating $\widetilde{\mu_m}$ against any functional is the same as evaluating $\mu_m$ against an isometry invariant functional. If we sum along $m$ and use the hypothesis, we get \begin{equation}
\left\langle\sum_m\frac{\Vol(U_m)}{\Vol(M)}\widetilde{\mu_m},G\right\rangle=\left\langle\sum_m\frac{\Vol(U_m)}{\Vol(M)}\mu_m,\widetilde{G}\right\rangle=\mu_{Berry}(\widetilde{G})=\mu_{Berry}(G).
\end{equation}It is easy to check that $\widetilde{\mu_m}$ preserves the property of being translation invariant: $	\left\langle\widetilde{\mu_m},\tau_yG\right\rangle=	\left\langle\mu_m,\widetilde{\tau_ yG}\right\rangle=\left\langle\mu_m,\widetilde{G}\right\rangle=\left\langle\widetilde{\mu_m},G\right\rangle$. Also notice that the measure $\mu_{Berry}$ is invariant with respect to isometries so that \begin{equation}\mu_{Berry}(\widetilde{G})=\mu_{Berry}\left(\frac{1}{|\Omega|}\int_{\Omega}G(\omega(\cdot))dm(\omega)\right)=\frac{1}{|\Omega|}\int_{\Omega}\mu_{Berry}(G)dm(\omega)=\mu_{Berry}(G).\end{equation} Then, we have $\mu_{Berry}$ as a linear combination of translation invariant measures. By using Fomin-Grenander-Maruyama theorem, we conclude that any $\widetilde{\mu_m}=\mu_{Berry}$.

The last step is to consider \begin{equation}\begin{aligned}
&	\left\langle\mu_{Berry},G\right\rangle=	\left\langle\widetilde{\mu_m},G\right\rangle=\frac{1}{|\Omega|}\int_{\Omega}\left\langle\mu_m,G(\omega(\cdot))\right\rangle dm(\omega)=\\&\frac{1}{|\Omega|}\int_{\Omega}\left\langle\mu_m\circ \omega,G\right\rangle dm(\omega)=\left\langle\frac{1}{|\Omega|}\int_{\Omega}(\mu_m\circ \omega) dm(\omega),G\right\rangle,
\end{aligned}\end{equation}where $\mu_m\circ \omega$ stands for $\left\langle\mu_m\circ \omega,G\right\rangle=\left\langle\mu_m,G(\omega(\cdot))\right\rangle.$ In this calculation we have $\mu_{Berry}$ as a linear combination of translation invariant measure $\mu_m\circ\omega$. Using lemma \ref{int}, this ensures that, for almost all $\omega\in\Omega$, $\mu_m\circ\omega=\mu_{Berry}$. Notice that it is enough to have one $\omega$ such that  $\mu_m\circ\omega=\mu_{Berry}$ because then we can compose with $\omega^{-1}$ and get, $\mu_m=\mu_{Berry}\circ\omega^{-1}=\mu_{Berry}$, because $\mu_{Berry}$ is invariant. 

 This is the same as saying that $\phi_p^n$ converges in law as a random field towards $\Psi_{Berry}$ in $U_m$. To conclude, just notice that this being true for any $m$ is the formulation of Berry's conjecture that we aimed to obtain. 

\section{The inverse localization property}\label{App}
	A related notion to the random wave model is the idea of \emph{inverse localization} introduced in \cite{Enciso2021} and related works of the same authors. 
The local behavior of an eigenfunction on any compact Riemannian manifold associated to large eigenvalues over length scales of order $1 / \sqrt{\lambda}$ is given by a solution to the Helmholtz equation,
\begin{equation}
\Delta h+h=0.
\end{equation}
A well known partial converse due to Hörmander \cite{hormander1969spectral} is that, given any ball $B \subset \mathbb{R}^d$ and any fixed solution $h$ to the Helmholtz equation on $B$, one can pick a sequence of \emph{approximate} Laplace eigenfunctions on the manifold whose behavior on a ball of radius $1 / \sqrt{\lambda}$ reproduces that of $h$ modulo a small error, $\lambda$ being the approximate frequency of the approximate eigenfunction. A slightly imprecise but very intriguing question is whether one can replace approximate eigenfunctions by \emph{bone fide} eigenfunctions in this estimate. The inverse localization principle gives an answer for this question in certain particular cases.

This idea was first introduced in \cite{Paco}, where the authors proved the inverse localization property for Beltrami fields on the three-dimensional sphere and flat torus. To our best knowledge, the only known examples of manifold satisfying the inverse localization property are certain flat tori \cite{10.1093/imrn/rnac282} and the round sphere $\mathbb{S}^d$ and all Riemannian quotients thereof \cite{Enciso2021}. In these works, the definition of inverse localization is the following.

 \begin{definition}\label{IL}(Inverse localization)
	A compact manifold $M$ has the inverse localization property if for some $p\in M$, any $\epsilon>0$, any $r\in\mathbb{N}\cup \{0\}$ and any $h:\mathbb{R}^d\rightarrow\mathbb{R}$ solution to the Helmholtz equation (i.e. $\Delta h+h=0$ in the whole $\mathbb{R}^d$), there exists a sequence of eigenvalues $\lambda_k\xrightarrow[k\rightarrow\infty]\ \infty$ and a sequence of associated eigenfunctions $\psi_k$, $\Delta_M\psi_k+\lambda_k \psi_k=0$, that for any $k$ large enough satisfy\begin{equation}
		\left\|\psi_k\left(\Exp_{p}\left(\frac{\cdot}{\sqrt{\lambda_k}}\right)\right)-h\right\|_{C^r(B)}<\epsilon.
	\end{equation}Here $\phi_{p}^k:=\psi_k\left(\Exp_{p}\left(\frac{\cdot}{\sqrt{\lambda_k}}\right)\right)$ is the standard localization of $\psi_k$ and $B$ is the unit Euclidean ball on the Euclidean space.
\end{definition}

In this definition, the point around which we localize is fixed beforehand and does not depend on $h$ (in fact, in all the known examples we can approximate any solution to the Helmholtz equation around any fixed point of the manifold). Moreover, in these examples, there are no restrictions on the eigenfunctions that can be used to approximate. However, the idea of inverse localization is imprecise and the rigorous definition can vary in these and other aspects, for example, the norm and domain of the approximation. The Berry property considered here implies a slightly different version of this inverse localization idea defined as follows.

 \begin{definition}\label{SIL}(Strong inverse localization)
	A compact manifold $M$ has the strong inverse localization property if for any eigenvalue $\lambda$ there exist an eigenfunction $\psi_\lambda$,  $\Delta_M\psi_\lambda+\lambda \psi_\lambda=0$, such that for any $r\in\mathbb{N}\cup \{0\}$ and any $h:\mathbb{R}^d\rightarrow\mathbb{R}$ solution to the Helmholtz equation (i.e. $\Delta h+h=0$ in the whole $\mathbb{R}^d$), there exists a positive measure subset of the manifold $N\subset M$ which satisfies that \begin{equation}
	\lim_{\lambda\rightarrow\infty}	\left\|\psi_\lambda\left(\Exp_{p}\left(\frac{\cdot}{\sqrt{\lambda}}\right)\right)-h\right\|_{C^r(B)}=0,
	\end{equation}for any $p\in N$. 
\end{definition}

\begin{remark}
	Notice that Definition \ref{SIL} does not imply Definition \ref{IL} because the set $N$ depends on $h$, while in the latter the point $p$ is fixed. However, it is stronger in the following two aspects:\begin{itemize}
		\item While for definition \ref{IL} to be true it is enough to have the existence of a sequence of eigenvalues along which we can approximate, in definition \ref{SIL} the limit needs to be true for all the sequences of eigenvalues going to infinity.
		\item In definition \ref{SIL}, the eigenfunction associated to each eigenvalue is always the same. This means that it is possible to approximate any $h$ using the same sequence of eigenfunctions. The same is not true for definition \ref{IL}, where different eigenfunctions associated to the same eigenvalue can be needed to approach different solutions to the Helmholtz equation. 
	\end{itemize}
\end{remark}
We recall that, by the local weak limit formulation, if $M$ has the Berry property it is true that for any bounded, continuous functional $G:C^\infty(\mathbb{R}^d)\rightarrow\mathbb{R}$ it holds that\begin{equation}
	\mathbb{E}_p(G(\phi_{p}^k))\xrightarrow[k\rightarrow\infty]\ \mathbb{E}(G(\Psi_{Berry})).
\end{equation}We can then assert the following \begin{proposition}
	If $(M,g)$ satisfies the Berry's property, then it also exhibits strong inverse localization. 
\end{proposition}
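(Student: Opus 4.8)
The plan is to deduce the strong inverse localization property of Definition~\ref{SIL} from the local weak limit form of the Berry property, used in the shape recalled just above: for every bounded continuous $G\colon C^\infty(\mathbb{R}^d)\to\mathbb{R}$ one has $\mathbb{E}_p[G(\phi_p^k)]\to\mathbb{E}[G(\Psi_{Berry})]$, for $p$ uniform in any chart $U_m$ and hence, summing with weights $\Vol(U_m)/\Vol(M)$, also for $p$ uniform on all of $M$. As the eigenfunction attached to the eigenvalue $\lambda_n$ one simply takes $\psi_n$ from the fixed orthonormal basis; this is exactly what makes the approximating family independent of $h$, as Definition~\ref{SIL} demands.

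The substantive input is that for every solution $h$ of the Helmholtz equation on $\mathbb{R}^d$, every $r\in\mathbb{N}\cup\{0\}$ and every $\delta>0$, the open set $W_{h,\delta}=\{f\in C^\infty(\mathbb{R}^d):\|f-h\|_{C^r(B)}<\delta\}$ has positive $\mu_{Berry}$-mass; I would establish this in three moves. First, the Cameron--Martin space of $\Psi_{Berry}$ is the space of Herglotz waves $x\mapsto\int_{\mathbb{S}^{d-1}}e^{2\pi i x\cdot\theta}\overline{g(\theta)}\,d\omega_{d-1}(\theta)$ with $g\in L^2(\mathbb{S}^{d-1})$, so in particular it contains all finite linear combinations of plane waves $e^{2\pi i x\cdot\theta_j}$. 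Second, such combinations are $C^r$-dense on any ball in the space of all Helmholtz solutions there: this is the classical plane-wave (Runge-type) approximation, transparent through the spherical-harmonic/Bessel expansion of a local Helmholtz solution, each term of which is itself a Herglotz wave. Third, the topological support of a Borel Gaussian measure on a separable Fréchet space is the closure of its Cameron--Martin space. Chaining the three, $h$ lies, restricted to $B$, in the support of $\mu_{Berry}$, i.e.\ $\mu_{Berry}(W_{h,\delta})>0$. Feeding this into the convergence above through the Portmanteau theorem (test against a continuous $G$ with $\mathbf{1}_{W_{h,\delta}}\leq G\leq\mathbf{1}_{W_{h,2\delta}}$) gives, for every $\delta>0$,
\[
\liminf_{k\to\infty}\frac{1}{\Vol(M)}\Vol\bigl(\{p\in M:\|\phi_p^k-h\|_{C^r(B)}<\delta\}\bigr)\ \geq\ \mu_{Berry}(W_{h,\delta})\ >\ 0.
\]

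The last step, which I expect to be the real obstacle, is to upgrade this ``a definite proportion of base points is $\delta$-close for all large $k$'' statement to a single positive-measure set $N\subset M$ along which the rescaled eigenfunctions $\phi_p^k$ genuinely converge to $h$. The natural attempt is to fix $\delta_j\downarrow0$, use the displayed inequality to select indices $k_j\uparrow\infty$ along which $A_j:=\{p\in M:\|\phi_p^{k_j}-h\|_{C^r(B)}<\delta_j\}$ has measure at least $\tfrac12\mu_{Berry}(W_{h,\delta_j})\Vol(M)$, and take $N$ to be a tail $\limsup$ of the sets $A_j$; the delicate point is that $\mu_{Berry}(W_{h,\delta_j})\to0$, so that keeping $\Vol(N)$ bounded away from zero is not automatic and forces one either to settle for convergence along a carefully extracted subsequence of eigenvalues or to exploit additional structure --- for instance the translation invariance and ergodicity of $\mu_{Berry}$ recorded in Section~\ref{2}, or a diagonalisation over a countable $C^r(B)$-dense family of Helmholtz solutions --- in order to stabilise the base points. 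Producing such an $N$ for every $h$ and every $r$ is precisely the assertion of Definition~\ref{SIL}, so this would complete the argument.
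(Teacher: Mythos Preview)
Your approach mirrors the paper's almost exactly: build a bump functional $F(f)=\chi(\|f-h\|_{C^r(B)})$, apply the local weak limit convergence $\mathbb{E}_p[F(\phi_p^k)]\to\mathbb{E}[F(\Psi_{Berry})]$, and show the right-hand side is strictly positive by proving that every Helmholtz solution $h$ lies in the topological support of $\mu_{Berry}$. The only substantive difference is in that support argument: the paper writes both $h$ and $\Psi_{Berry}$ as Bessel--Fourier series and notes that the random coefficients of $\Psi_{Berry}$ are independent Gaussians (hence full support), while you go through the Cameron--Martin space and the density of Herglotz waves. Both routes are standard and reach the same conclusion.

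The difficulty you flag in the final step is real, and the paper does not resolve it. After obtaining, for each fixed $\epsilon>0$ and all large $k$, a positive-measure set of base points where $\|\phi_p^k-h\|_{C^r(B)}<\epsilon$, the paper simply writes ``This can be made true for any $\epsilon$, so we have proved that $\lim_{k\to\infty}\|\phi_p^k-h\|_{C^r(B)}=0$'' and asserts this holds on a positive-measure set of $p$. No mechanism is given for passing from the family of $\epsilon$-dependent sets to a single positive-measure $N$ on which the limit genuinely holds; since $\mu_{Berry}(W_{h,\epsilon})\to 0$ as $\epsilon\to 0$, exactly the obstruction you articulate applies. So your proposal is at the same level of completeness as the paper's own proof: both establish that, for every $\epsilon$, a definite proportion of base points is eventually $\epsilon$-close, but neither upgrades this to the pointwise limit on a fixed positive-measure $N$ that Definition~\ref{SIL}, read literally, demands. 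Your caution here is warranted rather than excessive.
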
\begin{proof}
Fix a small error $\epsilon>0$.	Let $\chi\in C^\infty(\mathbb{R}^d)$ be an even decreasing function that is equal to $1$ on $(0,\epsilon/2)$ and is supported on $(0,\epsilon)$. Fix a monochromatic wave $h$, a natural number $r\in\mathbb{N}$ and take the functional $F$ defined as\begin{equation}
		F(f):=\chi\left(\left\|f-h\right\|_{C^r(B)}\right)
	\end{equation}for any $f\in C^\infty(\mathbb{R}^d)$. Explicitly, we have that \begin{equation}
	F(f)=\chi\left(\left\|f-h\right\|_{C^r(B)}\right)=\left\{\begin{aligned}
		&1 \text{ if } \|f-h\|_{C^r(B)}<\epsilon/2 \\
		&\text{Smooth in the middle}\\
		&	0 \text{ if } \|f-h\|_{C^r(B)}>\epsilon.
	\end{aligned}\right.
\end{equation}In particular,  \begin{equation}
		F(\phi_{p}^k)=\chi\left(\left\|\phi_{p}^k-h\right\|_{C^r(B)}\right),
	\end{equation}for any $k$. This non-linear functional is continuous and bounded. Since the Berry property holds by assumption, we have \begin{equation}
		\mathbb{E}_p(F(\phi_{p}^k))\xrightarrow[k\rightarrow\infty]\ \mathbb{E}(F(\Psi_{Berry})).
	\end{equation}
Notice that \begin{equation}
	\mathbb{E}(F(\Psi_{Berry}))=\int_{C^r(B)}F(f)d\mu_{Berry}(f)=\int_{C^r(B)}\chi\left(\left\|f-h\right\|_{C^r(B)}\right)d\mu_{Berry}(f)=
\end{equation}\begin{equation}
\int_{\{f\in C^r, \|f-h\|<\epsilon/2\}}\chi\left(\left\|f-h\right\|\right)d\mu_{Berry}(f)+\int_{\{f\in C^r, \epsilon/2\leq\|f-h\|\leq\epsilon\}}\chi\left(\left\|f-h\right\|\right)d\mu_{Berry}(f)
\end{equation}\begin{equation}
= \mu_{Berry}\left(\{f\in C^r(B), \|f-h\|_{C^r(B)}\leq\epsilon/2\}\right)+\int_{\{f\in C^r, \epsilon/2\leq\|f-h\|\leq\epsilon\}}\chi\left(\left\|f-h\right\|\right)d\mu_{Berry}(f).
\end{equation}

The next step is to show that the set of functions that are $\epsilon/2$-close to $h$ in $C^r(B)$ metric has positive Berry measure. This is standard and can be deduced from \cite[Section A7]{NZ2016}. A brief sketch of a proof is as follows.

First, recall that any solution to the Helmholtz equation can be expanded as a Bessel-Fourier series as \begin{equation}
	h(x)= \sum_{l=0}^\infty \sum_{m=1}^{d_l} c_{lm}\,\frac{J_{l+\frac d2-1}(|x|)}{|x|^{\frac d2-1}}\, Y_{lm}\left(\frac x{|x|}\right).
\end{equation}
Here $J_\nu$ denotes the Bessel function of order~$\nu$ and
$\{Y_{lm}(\xi)\}$ is a real-valued orthonormal basis of $d$-dimensional spherical harmonics; the order~$l$ means that the spherical harmonic is the restriction to the sphere~$\SS^{d-1}$ of a homogeneous harmonic polynomial of degree~$l$, and $d_l$ is the multiplicity of this space.

On the other hand, it is known that the field $\Psi_{Berry}$ can be written as \begin{equation}
\Psi_{Berry}(x)= \sum_{l=0}^\infty \sum_{m=1}^{d_l} a_{lm}\,\frac{J_{l+\frac d2-1}(|x|)}{|x|^{\frac d2-1}}\, Y_{lm}\left(\frac x{|x|}\right),
\end{equation}where $a_{lm}$ are independent Gaussian variables.
Since the topology we are considering in $C^\infty(\mathbb{R}^d)$ is precisely that of convergence of each coefficient in the expansion, we have the desired result because for any $l\in\mathbb{N}$ and any $1\leq m\leq d_l$, $P(a_{lm}=c_{lm})>0$.

 Using this, we can then infer that $\mathbb{E}(F(\Psi_{Berry}))>0$. We conclude that $\mathbb{E}_p(F(\phi_{p}^k))$ converges to a positive number when $k\rightarrow\infty$, and hence for big enough $k$ (and so, small enough $1/\sqrt{\lambda_k}$) there must be a positive measure set of points on $M$ for which $F(\phi_{p}^k)>0$. This clearly implies, by the definition of $F$, that for those points and large values of $k$, we have that $ \|\phi_{p}^k-h\|_{C^r(B)}<\epsilon$. This can be made true for any $\epsilon$, so we have proved that \begin{equation}
 	\lim_{k \rightarrow \infty}\|\phi_{p}^k-h\|_{C^r(B)}=0. 
 \end{equation}This is, of course, also true for any subsequence of $\lambda_k$ so indeed\begin{equation}
 \lim_{\lambda \rightarrow \infty}\left\|\psi_\lambda\left(\Exp_{p}\left(\frac{\cdot}{\sqrt{\lambda}}\right)\right)-h\right\|_{C^r(B)}=0,
 \end{equation}for a positive measure set of points $p$, which is precisely definition \ref{SIL}.
\end{proof}
Regarding this result, there are a few remarks to be made.
\begin{itemize}
	\item The set of admissible points for any $k\in\mathbb{N}$, say $N^k\subset M$, satisfies the stronger condition of being ``asymptotically dense''. Since the property \begin{equation}
		\mathbb{E}_p(F(\phi_{p}^k))\xrightarrow[k\rightarrow\infty]\ \mathbb{E}(F(\Psi_{Berry})).
	\end{equation}is also true if we restrict ourselves by choosing $p$ in an arbitrary open set, say $p\in U\subset M$, it is clear that the set $\lim_{k \rightarrow \infty}U\cap N^k$ has positive measure for any arbitrary open set. This implies that for any $\epsilon>0$ and any point $q\in M$, the set $\lim_{k \rightarrow \infty}B(q,\epsilon)\cap N^k\neq \emptyset$, and therefore $\lim_{k \rightarrow \infty}N^k$ is dense in $M$ when taking the limit.
	\item Our main interest is in the following easy application:
	\begin{corollary}
		If $(M,g)$ does not exhibit strong inverse localization, then it does not admit Berry's property. For example, as seen in \cite{10.1093/imrn/rnac282}, some tori (irrational ones) do not satisfy the inverse localization property (neither \ref{IL} nor \ref{SIL}) and, therefore, do not satisfy Berry's property.
	\end{corollary}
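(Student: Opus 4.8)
The plan is to derive the Corollary as the contrapositive of the preceding Proposition, supplemented by the known failure of inverse localization on irrational tori. First I would dispatch the general statement: if $(M,g)$ admitted Berry's property in either of the two forms (Definition \ref{C1} or Definition \ref{C2}, which the main Theorem shows are equivalent), then the Proposition just proved would force $M$ to enjoy the strong inverse localization property of Definition \ref{SIL}. This is a direct contradiction with the hypothesis, so a compact manifold lacking strong inverse localization cannot admit Berry's property. No work beyond quoting the Proposition is needed here.

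The second, illustrative half is to exhibit a concrete family to which this applies. Here I would invoke \cite{10.1093/imrn/rnac282}: on a suitably irrational torus $\mathbb{T}^d=\mathbb{R}^d/(\alpha_1\mathbb{Z}\times\cdots\times\alpha_d\mathbb{Z})$ — for instance with the numbers $\alpha_i^{-2}$ linearly independent over $\mathbb{Q}$ — every eigenfunction attached to a large eigenvalue $\lambda$ is a trigonometric polynomial supported on a uniformly bounded set of frequency vectors, all obtained from a single lattice vector by sign changes of the coordinates. After rescaling by $1/\sqrt\lambda$, the localized eigenfunction $\phi_p^k$ is therefore a superposition of at most $2^d$ plane waves whose directions lie in a fixed, very thin subset of $\mathbb{S}^{d-1}$, independently of the base point $p$. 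Such a function cannot be made $C^r(B)$-close to a genuinely $d$-dimensional solution $h$ of the Helmholtz equation — e.g. one whose Bessel–Fourier expansion has nonzero high-order coefficients — uniformly in $p$ and along every sequence $\lambda\to\infty$. This is exactly the obstruction that prevents these tori from satisfying Definition \ref{SIL} (and, in the weaker fixed-point form, Definition \ref{IL}); together with the first half it yields that irrational tori do not admit Berry's property.

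The step I expect to need the most care is reconciling the two notions of inverse localization. The Remark above emphasizes that Definitions \ref{SIL} and \ref{IL} are \emph{a priori} incomparable — the approximating set $N$ in \ref{SIL} depends on $h$, whereas in \ref{IL} the base point is fixed once and for all — so the failure of \ref{IL} recorded in \cite{10.1093/imrn/rnac282} does not formally entail the failure of \ref{SIL} that the Corollary's first sentence connects to Berry's property. I would handle this by noting that the torus obstruction is in fact stronger than either definition requires: it rules out approximating a fixed generic $h$ by localized eigenfunctions around \emph{any} point $p$ and for \emph{any} admissible choice of eigenfunction $\psi_\lambda$, which is precisely what contradicts \ref{SIL}. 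A safer alternative, if one prefers not to re-examine the proof in \cite{10.1093/imrn/rnac282}, is simply to state the example purely in terms of \ref{SIL}, since that is the only version the Proposition ties to Berry's property.
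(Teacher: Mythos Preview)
Your proposal is correct and matches the paper's approach: the Corollary is stated there without proof, as it is simply the contrapositive of the preceding Proposition, with the torus example handled by a bare citation to \cite{10.1093/imrn/rnac282}. Your additional discussion distinguishing the failure of Definition~\ref{IL} from that of Definition~\ref{SIL} is more careful than what the paper actually does---it merely asserts ``neither \ref{IL} nor \ref{SIL}'' on the strength of the citation---so your version is if anything more complete.
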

	\item The inverse localization property can be applied to study nodal sets of eigenfunctions. Precisely, it is true the following result \cite{10.1093/imrn/rnac282}: 
	\begin{theorem}\label{T.nodalsets}
		If a manifold $M$ does satisfy \ref{IL} or \ref{SIL}, given any natural~$N$ and a collection of compact embedded hypersurfaces~$\Si_j$ ($1\leq j\leq N$) of~$\RR^d$ that are not linked, any positive integer~$r$ and any $\ep$, there exists some~$R>0$ such that for all large enough~$n$ there is an eigenfunction $\psi_n$ with eigenvalue~$\la_n$ such that the function \begin{equation}
			\psi_n\left(\Exp_p\left(\frac{\cdot}{\sqrt{\lambda_n}}\right)\right)
		\end{equation}has at least~$N$ nodal components of the form
		\begin{equation}
		\widetilde\Si_j^n:= \la_n^{-1/2} \,  \Phi_{n}(c_j \Si_j + p_j)
		\end{equation}
		and at least $N$ nondegenerate local extrema in the ball of radius $R$. Here $c_j>0$, $p_j\in\RR^d$, and $\Phi_n$ is a diffeomorphism of~$\RR^d$ which is close to the identity	$  \|\Phi_n-\id\|_{C^r(\RR^d)}<\ep$.
	\end{theorem}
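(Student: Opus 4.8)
The plan is to reduce the statement, through the inverse localization property, to a purely Euclidean realization problem for solutions of the Helmholtz equation, and then to carry the structurally stable features of such a solution back to the localized eigenfunctions. Recall that both Definitions \ref{IL} and \ref{SIL} supply, for a fixed solution $h$ of $\De h+h=0$ on $\RR^d$ and a prescribed regularity $r$, a point $p\in M$ (the fixed one in \ref{IL}, or any point of the positive-measure set in \ref{SIL}), a sequence of eigenvalues $\la_n\to\infty$ and associated eigenfunctions $\psi_n$ with $\|\phi^n_p-h\|_{C^r(B_R)}\to0$, where $\phi^n_p(y)=\psi_n(\Exp_p(y/\sqrt{\la_n}))$ and $B_R$ denotes the ball of radius $R$ about the origin; arguing exactly as in the proof of the Proposition above (replacing the cutoff functional built on the unit ball by one built on $B_R$), this holds on a ball of any fixed radius. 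So it suffices to produce a suitable $h$ together with a radius $R$.

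The first and genuinely hard step is a realization theorem for nodal sets of Euclidean Helmholtz solutions, of the type used in \cite{10.1093/imrn/rnac282} (compare \cite{Enciso2021}, \cite{ROMANIEGA20221}): since the compact embedded hypersurfaces $\Si_j\subset\RR^d$, $1\leq j\leq N$, are not linked, one can find a solution $h$ of $\De h+h=0$ on $\RR^d$, positive constants $c_j$, points $p_j\in\RR^d$ and a radius $R>0$ such that, inside $B_R$, the nodal set $h^{-1}(0)$ has at least $N$ connected components, the $j$-th of which is the image of $c_j\Si_j+p_j$ under a diffeomorphism of $\RR^d$, and such that $h$ has at least $N$ nondegenerate local extrema in $B_R$. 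Moreover these features are structurally stable: $\grad h$ is nonvanishing along the chosen nodal components and the Hessian of $h$ is invertible at the chosen extrema. It is here, and only here, that the hypothesis that the $\Si_j$ are not linked enters, through a Runge-type global approximation of a suitable local model.

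With such $h$, $R$, and the given $r$ and $\ep$ in hand, the next step is to quantify this structural stability: I would show that there is $\de>0$ such that every $u\in C^r(B_R)$ with $\|u-h\|_{C^r(B_R)}<\de$ still has, inside $B_R$, at least $N$ nodal components, the $j$-th being the image of $c_j\Si_j+p_j$ under a diffeomorphism $\Phi$ of $\RR^d$ with $\|\Phi-\id\|_{C^r(\RR^d)}<\ep$, together with at least $N$ nondegenerate local extrema close to those of $h$. This is a routine combination of the implicit function theorem (to track the transverse zero set) and Thom's isotopy lemma (to promote it to an ambient diffeomorphism close to the identity), together with the persistence of nondegenerate critical points under $C^2$-small perturbations. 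Then, applying inverse localization to this fixed $h$, for all large $n$ we have $\|\phi^n_p-h\|_{C^r(B_R)}<\de$, so $\phi^n_p$ has the $N$ nodal components $\Phi_n(c_j\Si_j+p_j)$ and $N$ nondegenerate extrema inside $B_R$. Pulling everything back through the map $y\mapsto\Exp_p(y/\sqrt{\la_n})$ converts a nodal component $\Phi_n(c_j\Si_j+p_j)$ of $\phi^n_p$ into the nodal component $\la_n^{-1/2}\Phi_n(c_j\Si_j+p_j)=\widetilde\Si_j^n$ of $\psi_n$ near $p$, and likewise transports the extrema, which is the assertion of the theorem with this value of $R$.

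The hard part is the realization step of the second paragraph: exhibiting a single Helmholtz solution whose nodal set contains the prescribed unlinked hypersurfaces up to diffeomorphism and which simultaneously carries the prescribed number of nondegenerate extrema. Everything that follows — the quantitative structural stability and the rescaling bookkeeping — is soft, and the inverse localization property is precisely the bridge converting this Euclidean statement into one about honest eigenfunctions of $M$.
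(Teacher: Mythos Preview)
The paper does not actually prove this theorem: it is quoted as a known result from \cite{10.1093/imrn/rnac282} (and the Enciso--Peralta-Salas circle of ideas), so there is no in-paper proof to compare against. Your sketch is the correct strategy and is precisely the one used in those references: first invoke the Euclidean realization theorem producing a Helmholtz solution $h$ whose nodal set contains diffeomorphic copies of the unlinked $\Si_j$ with transversal zeros and with the required nondegenerate critical points; then use the structural stability of these features under $C^r$-small perturbations (implicit function theorem plus Thom isotopy); and finally feed this $h$ into the inverse localization property to transport everything to genuine eigenfunctions. Your identification of the realization step as the only hard ingredient, and of the rest as soft stability plus rescaling bookkeeping, is accurate.

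One small point of care: the theorem as stated in the paper attaches the factor $\la_n^{-1/2}$ to the nodal components of the \emph{localized} function $\phi^n_p$, which is slightly inconsistent (the rescaling by $\la_n^{-1/2}$ should appear only after pulling back to $M$ via $\Exp_p(\cdot/\sqrt{\la_n})$). Your last paragraph handles this correctly by distinguishing the nodal components $\Phi_n(c_j\Si_j+p_j)$ of $\phi^n_p$ from their images $\widetilde\Si_j^n$ in $M$; just be aware that the statement you are proving may need this minor cosmetic fix.
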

	Therefore, using this formulation of the Berry's conjecture we can gain some knowledge in the topology of nodal set of eigenfunctions of manifolds satisfying the RWM. 
	\item  	Definition \ref{C1} does also make sense if we consider $M$ to be a manifold with boundary and $\left(\psi_n\right)_n$ to be Dirichlet eigenfunctions, and can be found in \cite{Ingremeau2021}. The inverse localization can also be defined in that context. 
\end{itemize}
\section*{Acknowledgments}
I would like to express my deepest gratitude to Maxime Ingremeau, who has helped me with several technical parts of this work. This paper would not have been possible without his support. I also would like to thank Daniel Peralta-Salas and Alberto Enciso for their useful comments on the topic. 

This work has received funding from the European Research Council (ERC) under the European Union’s Horizon 2020 research and innovation program through the grant agreement 862342 (A.G.-R.). This work is supported in part by the ICMAT–Severo Ochoa grant CEX2019-000904-S and the grant RED2018-102650-T funded by MCIN/AEI/10.13039/501100011033. A.G.-R. is also a postgraduate fellow of the Ministry of Science and Innovation at the Residencia de Estudiantes (2022–2023).

	\bibliographystyle{amsplain}

	\bibliography{export}
	
\end{document}